%------------------------------------------------------------------------------
% Beginning of journal.tex
%------------------------------------------------------------------------------
%
% AMS-LaTeX version 2 sample file for journals, based on amsart.cls.
%
%        ***     DO NOT USE THIS FILE AS A STARTER.      ***
%        ***  USE THE JOURNAL-SPECIFIC *.TEMPLATE FILE.  ***
%
% Replace amsart by the documentclass for the target journal, e.g., tran-l.
%

\documentclass[reqno]{amsart}
\usepackage{pictexwd,dcpic}
\usepackage{slashbox}

\topmargin 0in \textheight 8.5in \textwidth 6.5in \oddsidemargin
-0.105in \evensidemargin -0.105in

\newtheorem{theorem}{Theorem}[section]
\newtheorem{proposition}[theorem]{Proposition}
\newtheorem{lemma}[theorem]{Lemma}
\newtheorem{corollary}[theorem]{Corollary}

\theoremstyle{definition}

\newtheorem{example}[theorem]{Example}

\theoremstyle{remark}
\newtheorem{remark}[theorem]{Remark}

\numberwithin{equation}{section}

%    Absolute value notation

%    Blank box placeholder for figures (to avoid requiring any
%    particular graphics capabilities for printing this document).

\begin{document}

\title{A modularity criterion for Klein forms, with an application to modular forms of level $13$}

%    Information for second author
\author{Ick Sun Eum}
\address{Department of Mathematical Sciences, KAIST}
\curraddr{Daejeon 373-1, Korea} \email{zandc@kaist.ac.kr}
\thanks{}

%    Information for first author
\author{Ja Kyung Koo}
%    Address of record for the research reported here
\address{Department of Mathematical Sciences, KAIST}
%    Current address
\curraddr{Daejeon 373-1, Korea} \email{jkkoo@math.kaist.ac.kr}
%    \thanks will become a 1st page footnote.
\thanks{}

%    Information for second author
\author{Dong Hwa Shin}
\address{Department of Mathematical Sciences, KAIST}
\curraddr{Daejeon 373-1, Korea} \email{shakur01@kaist.ac.kr}
\thanks{}

\subjclass[2010]{11F11, 11F20}

\keywords{modular forms, Klein forms, Dedekind eta-function.
\newline The research was partially supported by Basic Science Research Program through the NRF of Korea
funded by MEST (2010-0001654).}

\begin{abstract}
We find some modularity criterion for a product of Klein forms of
the congruence subgroup $\Gamma_1(N)$ (Theorem \ref{modularity2})
and, as its application, construct a basis of the space of modular
forms for $\Gamma_1(13)$ of weight $2$ (Example
\ref{fourvariables}). In the process we face with an interesting
property about the coefficients of certain theta function from a
quadratic form and prove it conditionally by applying Hecke
operators (Proposition \ref{proof}).
\end{abstract}

\maketitle

\section{Introduction}

The \textit{Dedekind eta-function} $\eta(\tau)$ is defined to be the
infinite product
\begin{equation}\label{eta}
\eta(\tau)=q^\frac{1}{24}\prod_{n=1}^\infty(1-q^n)\quad(\tau\in\mathfrak{H})
\end{equation}
where $q=e^{2\pi i\tau}$ and
$\mathfrak{H}=\{\tau\in\mathbb{C}:\mathrm{Im}(\tau)>0\}$. This
function plays an important role of building block which constitutes
various modular forms of integral or half-integral weight. For
example, the classical theta function
\begin{equation*}
\Theta(\tau)=\sum_{n=-\infty}^\infty
q^{n^2}\quad(\tau\in\mathfrak{H}),
\end{equation*}
which is a modular form for $\Gamma_0(4)$ of weight $1/2$
(\cite{Hecke}), can be written as
\begin{equation}\label{firsttheta}
\Theta(\tau)=\prod_{n=1}^\infty
\frac{(1-q^{2n})^5}{(1-q^n)^2(1-q^{4n})^2}
=\frac{\eta(2\tau)^5}{\eta(\tau)^2\eta(4\tau)^2}
\end{equation}
by the Jacobi's Triple Product Identity (\cite{Fine} $\S$17)
\begin{equation}\label{Jacobi}
\prod_{n=1}^\infty(1-q^{2n})(1+aq^{2n-1})(1+a^{-1}x^{2n-1})=
\sum_{m=-\infty}^\infty a^{m}q^{m^2}.
\end{equation}
And, every modular form for $\mathrm{SL}_2(\mathbb{Z})$ is known to
be expressed as a rational function in $\eta(\tau)^8$,
$\eta(2\tau)^4$ and $\eta(4\tau)^8$ (\cite{Ono} Theorem 1.67).
\par
On the other hand, we are further required to present more building
blocks to construct modular forms of integral weight for modular
groups of higher level. To this end we focus on the following Klein
forms.
\par
For $(r_1,r_2)\in\mathbb{Q}^2-\mathbb{Z}^2$ the \textit{Klein form}
$\mathfrak{k}_{(r_1,r_2)}(\tau)$ is defined by the following
infinite product expansion
\begin{equation}\label{KleinForm}
\mathfrak{k}_{(r_1,r_2)}(\tau)=e^{\pi
ir_2(r_1-1)}q^{\frac{1}{2}r_1(r_1-1)}(1-q_z)
\prod_{n=1}^\infty(1-q^nq_z)(1-q^nq_z^{-1})(1-q^n)^{-2}\quad(\tau\in\mathfrak{H})
\end{equation}
where $q_z=e^{2\pi iz}$ with $z=r_1\tau+r_2$. We see from Example
\ref{examplehalf} that the Klein forms seem to be a variation of
$\eta(\tau)^{-2}$. (In the original definition (\cite{K-L} Chapter 2
$\S$1) there is an extra factor $i/2\pi$.) Furthermore, we know
directly from the definition that it is a holomorphic function which
has no zeros and poles on $\mathfrak{H}$.
\par
In this paper we shall investigate some modularity criterions for
products of Klein forms of modular groups $\Gamma_1(N)$ of arbitrary
level (Theorems \ref{modularity2} and \ref{etaquotient}). As
applications we shall express theta functions associated with
quadratic forms in view of Klein forms and find a basis of the space
of modular forms for $\Gamma_1(13)$ of weight $2$ (Examples
\ref{twovariables} and \ref{fourvariables}).
\par
Let $\Theta_Q(\tau)=\sum_{n=0}^\infty r_Q(n)q^n$ be the theta
function associated with the quadratic form
$Q(x_1,x_2,x_3,x_4)=x_1^2+2x_2^2+x_3^2+x_4^2+x_1x_3+x_1x_4+x_2x_4$
where $r_Q(n)$ is the cardinality of the solution set
$\{\mathbf{x}\in\mathbb{Z}^4:Q(\mathbf{x})=n\}$ for $n\geq0$. We
shall find some primes $p$ which satisfy an interesting relation
\begin{equation*}
r_Q(p^2n)=\frac{r_Q(p^2)r_Q(n)}{r_Q(1)}\quad\textrm{for any integer
$n\geq1$ prime to $p$}
\end{equation*}
by applying Hecke operators to $\Theta_Q(\tau)$ (Proposition
\ref{proof} and Remark \ref{lastrmk}).
\par
Cho-Kim-Koo recently performed in \cite{C-K-K} a similar work about
modularity of Klein forms and constructed bases of certain spaces of
modular forms by describing the Fourier coefficients of some finite
products of Klein forms in terms of divisor functions. For the
purpose they adopted some useful nine identities between the
$q$-products and the $q$-series from the basic hypergeometric series
(\cite{Fine}). Thus, due to this technical restriction they could
hardly find examples of higher level, from which our work was
motivated to improve modularity criterion for $\Gamma_1(N)$.

\section{Modularity criterions}

First, we start with recalling some necessary transformation
formulas investigated in \cite{K-L}.

\begin{proposition}\label{TransformKlein}
\begin{itemize}
\item[(i)] For $(r_1,r_2)\in\mathbb{Q}^2-\mathbb{Z}^2$ and
$(s_1, s_2)\in\mathbb{Z}^2$ we get
\begin{eqnarray*}
\mathfrak{k}_{(-r_1,-r_2)}(\tau)&=&-\mathfrak{k}_{(r_1,r_2)}(\tau)\\
\mathfrak{k}_{(r_1,r_2)+(s_1,s_2)}(\tau)&=&(-1)^{s_1s_2+s_1+s_2}e^{-\pi
i(s_1r_2-s_2r_1)}\mathfrak{k}_{(r_1,r_2)}(\tau).
\end{eqnarray*}
\item[(ii)] For $(r_1,r_2)\in\mathbb{Q}^2-\mathbb{Z}^2$ and
$\alpha=\begin{pmatrix}a&b\\c&d\end{pmatrix}\in\mathrm{SL}_2(\mathbb{Z})$
we derive
\begin{equation*}
\mathfrak{k}_{(r_1,r_2)}(\tau)\circ\alpha=
\mathfrak{k}_{(r_1,r_2)}\bigg(\frac{a\tau+b}{c\tau+d}\bigg)=
(c\tau+d)^{-1}\mathfrak{k}_{(r_1,r_2)\alpha}(\tau) =
(c\tau+d)^{-1}\mathfrak{k}_{(r_1a+r_2c,r_1b+r_2d)}(\tau).
\end{equation*}
\item[(iii)] Let $\mathbf{B}_2(X)=X^2-X+1/6$ be the second Bernoulli polynomial and $\langle X\rangle$ be the fractional part of
$X\in\mathbb{R}$ so that $0\leq\langle X\rangle<1$. For
$(r_1,r_2)\in\mathbb{Q}^2-\mathbb{Z}^2$ we have
\begin{equation*}
\mathrm{ord}_{q}~\mathfrak{k}_{(r_1,r_2)}(\tau)=\frac{1}{2}\bigg(\mathbf{B}_2(\langle
r_1\rangle)-\frac{1}{6}\bigg)=\frac{1}{2}\langle r_1\rangle(\langle
r_1\rangle-1).
\end{equation*}
\end{itemize}
\end{proposition}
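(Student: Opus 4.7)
The plan is to verify each part by direct manipulation of the infinite product (1.4), exploiting that $q_z=e^{2\pi i(r_1\tau+r_2)}$, and reducing the modular transformation to a check on generators of $\mathrm{SL}_2(\mathbb{Z})$.

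For (i), the sign identity follows by substituting $(r_1,r_2)\mapsto(-r_1,-r_2)$ in (1.4). The new $z$ is $-z$, so $q_z\mapsto q_z^{-1}$; after writing $1-q_z^{-1}=-q_z^{-1}(1-q_z)$ and using the manifest symmetry of $\prod_{n\geq 1}(1-q^nq_z)(1-q^nq_z^{-1})$ under $q_z\leftrightarrow q_z^{-1}$, the claim reduces to matching prefactors: the extra $-q_z^{-1}=-e^{-2\pi ir_2}q^{-r_1}$ combines with $e^{\pi ir_2(r_1+1)}q^{r_1(r_1+1)/2}$ to produce precisely $-e^{\pi ir_2(r_1-1)}q^{r_1(r_1-1)/2}$. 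The translation identity is proved in the same spirit: a shift of $(r_1,r_2)$ by $(s_1,s_2)\in\mathbb{Z}^2$ multiplies $q_z$ by $e^{2\pi is_2}q^{s_1}$, so after reindexing the infinite product, the finitely many boundary factors $(1-q^kq_z^{\pm 1})$ can be absorbed into the prefactor to yield the advertised sign and phase.

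For (ii), since $\mathrm{SL}_2(\mathbb{Z})=\langle T,S\rangle$ with $T=\bigl(\begin{smallmatrix}1&1\\0&1\end{smallmatrix}\bigr)$ and $S=\bigl(\begin{smallmatrix}0&-1\\1&0\end{smallmatrix}\bigr)$, and since both sides of the proposed identity satisfy the cocycle $(c\tau+d)^{-1}$-relation under matrix composition, it suffices to verify the transformation on these two generators. The $T$ case is direct substitution: the only subtlety is that the fractional power $q^{r_1(r_1-1)/2}$ picks up the phase $e^{\pi ir_1(r_1-1)}$ under $\tau\mapsto\tau+1$, which combines with $e^{\pi ir_2(r_1-1)}$ to give $e^{\pi i(r_1+r_2)(r_1-1)}$, matching $\mathfrak{k}_{(r_1,r_1+r_2)}$. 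The $S$ case is the main obstacle, being the Klein-form analogue of $\eta(-1/\tau)=\sqrt{-i\tau}\,\eta(\tau)$. I would derive it by expressing $\mathfrak{k}_{(r_1,r_2)}$ in terms of the Weierstrass $\sigma$-function of the lattice $\mathbb{Z}\tau+\mathbb{Z}$ and its quasi-periods $\eta_1,\eta_2$, whose transformation behaviour under change of basis of the lattice is classical; Poisson summation applied to the logarithmic derivative of the product is an alternative route.

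For (iii), first apply (i) to reduce to the case $r_1=\langle r_1\rangle\in[0,1)$, since an integer shift of the first coordinate multiplies $\mathfrak{k}_{(r_1,r_2)}$ by a nonzero constant and so preserves $\mathrm{ord}_q$. In this range, $q_z=e^{2\pi ir_2}q^{r_1}$, and for $n\geq 1$ each factor $1-q^nq_z$ and $1-q^nq_z^{-1}$ is a unit in the $q$-expansion ring because $n+r_1>0$ and $n-r_1>0$. Also $1-q_z$ is a unit when $r_1>0$, and when $r_1=0$ the hypothesis $(0,r_2)\notin\mathbb{Z}^2$ forces $r_2\notin\mathbb{Z}$, so $1-q_z=1-e^{2\pi ir_2}$ is a nonzero constant. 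Therefore $\mathrm{ord}_q\mathfrak{k}_{(r_1,r_2)}$ equals the exponent $\tfrac{1}{2}r_1(r_1-1)$ coming from the prefactor, which is exactly $\tfrac{1}{2}\langle r_1\rangle(\langle r_1\rangle-1)=\tfrac{1}{2}(\mathbf{B}_2(\langle r_1\rangle)-1/6)$.
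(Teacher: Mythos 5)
Your plan is correct in outline, but it inverts the logic of the source that the paper actually relies on: the paper gives no argument at all for Proposition \ref{TransformKlein}, citing Kubert--Lang, where the Klein form is \emph{defined} lattice-theoretically, $\mathfrak{k}(z,L)=e^{-\eta(z,L)z/2}\sigma(z,L)$ with $z=r_1\omega_1+r_2\omega_2$, so that (i) and (ii) are nearly formal consequences of the homogeneity $\mathfrak{k}(\lambda z,\lambda L)=\lambda\,\mathfrak{k}(z,L)$ and the independence of $\mathfrak{k}(z,L)$ from the choice of basis of $L$; the product expansion (\ref{KleinForm}) is then a derived theorem. You instead take (\ref{KleinForm}) as the definition and verify everything by manipulating the product. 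For (i), (iii), and the $T$-generator part of (ii) this is complete and sound: your prefactor bookkeeping for $(r_1,r_2)\mapsto(-r_1,-r_2)$ is exactly right; the reindexing mechanism for integer translations does produce the stated sign $(-1)^{s_1s_2+s_1+s_2}$ and phase; the reduction of (ii) to generators is legitimate because the cocycle identity $j(\alpha\beta,\tau)=j(\alpha,\beta\tau)\,j(\beta,\tau)$ shows that the set of $\alpha$ for which the identity holds for every $r$ is closed under products and inverses; and the unit-factor argument in (iii), including the degenerate case $r_1\in\mathbb{Z}$, $r_2\notin\mathbb{Z}$, is precisely why the order is read off from the exponent $\tfrac12\langle r_1\rangle(\langle r_1\rangle-1)$ alone. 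What your route buys is elementarity everywhere except at one point; what the lattice-theoretic route buys is that the hard point disappears into the definition.

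That one point is the $S$-transformation, and there your proposal is not yet a proof: to ``express $\mathfrak{k}_{(r_1,r_2)}$ in terms of the Weierstrass $\sigma$-function and its quasi-periods'' you must first establish that the product (\ref{KleinForm}) \emph{equals} the sigma-function expression. That identification is a Jacobi-triple-product-type identity and is exactly the nontrivial content of the cited Kubert--Lang development, run in the opposite direction; once you invoke it, the $S$-case follows from basis-invariance as above, but you have then re-imported the entire cited theory rather than given an independent argument. If you want the proof to be genuinely self-contained from (\ref{KleinForm}), you should carry out your alternative suggestion explicitly: relate the product to the Jacobi theta function $\theta_1(z|\tau)$ via (\ref{Jacobi}) and prove the modular transformation of $\theta_1$ under $\tau\mapsto-1/\tau$ by Poisson summation, together with $\eta(-1/\tau)=\sqrt{-i\tau}\,\eta(\tau)$. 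As it stands, your write-up is no less rigorous than the paper's treatment (which is only a citation), but the $S$-step is the place where all the analytic work lives, and it is currently asserted rather than done.
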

\begin{proof}
See \cite{K-L} Chapter 2 $\S$1.
\end{proof}

For every integer $k$,
$\alpha=\begin{pmatrix}a&b\\c&d\end{pmatrix}\in\mathrm{SL}_2(\mathbb{Z})$
and a function $f(\tau)$ on $\mathfrak{H}$ we write
\begin{equation*}
f(\tau)|[\alpha]_k=(c\tau+d)^{-k}(f(\tau)\circ\alpha).
\end{equation*}
And, we mainly consider the following three congruence subgroups
\begin{eqnarray*}
\Gamma(N)&=&\bigg\{\alpha\in\mathrm{SL}_2(\mathbb{Z})~:~
\alpha\equiv\left(\begin{matrix}1&0\\0&1\end{matrix}\right)\pmod{N}\bigg\}\\
\Gamma_1(N)&=&\bigg\{\alpha\in\mathrm{SL}_2(\mathbb{Z})~:~
\alpha\equiv\left(\begin{matrix}1&*\\0&1\end{matrix}\right)\pmod{N}\bigg\}\\
\Gamma_0(N)&=&\bigg\{\alpha\in\mathrm{SL}_2(\mathbb{Z})~:~
\alpha\equiv\left(\begin{matrix}*&*\\0&*\end{matrix}\right)\pmod{N}\bigg\}
\end{eqnarray*}
for an integer $N\geq2$. When $\Gamma$ is one of the above
congruence subgroups and $k$ is any integer, we say that a
holomorphic function $f(\tau)$ on $\mathfrak{H}$ is a
\textit{modular form for $\Gamma$ of weight $k$} if
\begin{itemize}
\item[(i)] $f(\tau)|[\gamma]_k=f(\tau)$ for all $\gamma\in\Gamma$;
\item[(ii)] $f(\tau)$ is holomorphic at every cusp (\cite{Shimura}
Definition 2.1).
\end{itemize}
We denote by $M_k(\Gamma)$ the $\mathbb{C}$-vector space of modular
forms for $\Gamma$ of weight $k$. If we replace (ii) by
\begin{itemize}
\item[(ii)$^\prime$] $f(\tau)$ is meromorphic at every cusp,
\end{itemize}
then we call $f(\tau)$ a \textit{nearly holomorphic modular form for
$\Gamma$ of weight $k$}.
\par
Kubert and Lang (\cite{K-L}) gave the following modularity condition
for $\Gamma(N)$.

\begin{proposition}\label{modularity}
For an integer $N\geq2$, let
$\{m(r)\}_{r\in\frac{1}{N}\mathbb{Z}^2-\mathbb{Z}^2}$ be a family of
integers such that $m(r)=0$ except finitely many $r$. Then the
product of Klein forms
\begin{equation*}
\prod_{r=(r_1,r_2)\in\frac{1}{N}\mathbb{Z}^2-\mathbb{Z}^2}
\mathfrak{k}_r(\tau)^{m(r)}
\end{equation*}
is a nearly holomorphic modular form for $\Gamma(N)$ of weight
$-\sum_r m(r)$ if and only if
\begin{eqnarray*}
&&\sum_r m(r)(Nr_1)^2\equiv\sum_r
m(r)(Nr_2)^2\equiv0\pmod{\gcd(2,N)\cdot
N}\\
&&\sum_r m(r)(Nr_1)(Nr_2)\equiv0\pmod{N}.
\end{eqnarray*}
\end{proposition}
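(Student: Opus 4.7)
The plan is to split the statement into its two standard ingredients: the automorphy law under $\Gamma(N)$ of weight $-\sum_r m(r)$, and meromorphy at the cusps. Meromorphy is essentially automatic. Each $\mathfrak{k}_r$ has neither zeros nor poles on $\mathfrak{H}$, so the product is holomorphic on $\mathfrak{H}$; Proposition \ref{TransformKlein}(iii) gives a finite $q$-order at $i\infty$; and for any other cusp $\alpha(i\infty)$, Proposition \ref{TransformKlein}(ii) rewrites the $[\alpha]_{-1}$-transform as another finite product of Klein forms, so finiteness of the $q$-order there reduces to the same calculation. The substance of the proposition is thus the automorphy condition.

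For the automorphy, I would write $\alpha = I + N\beta$ with $\beta = \begin{pmatrix}a' & b' \\ c' & d'\end{pmatrix} \in M_2(\mathbb{Z})$ and $r = (p_1/N,p_2/N)$. Then $r\alpha - r = (s_1,s_2) \in \mathbb{Z}^2$ with $s_1 = p_1 a' + p_2 c'$ and $s_2 = p_1 b' + p_2 d'$, so combining Proposition \ref{TransformKlein}(ii) with (i),
\begin{equation*}
\mathfrak{k}_r\,\big|\,[\alpha]_{-1} \;=\; \mathfrak{k}_{r\alpha} \;=\; \varepsilon_r(\alpha)\,\mathfrak{k}_r, \qquad \varepsilon_r(\alpha) \;=\; (-1)^{s_1 s_2 + s_1 + s_2}\,e^{-\pi i(s_1 r_2 - s_2 r_1)}.
\end{equation*}
Writing $(-1)^k = e^{\pi i k}$, the requirement $\prod_r \varepsilon_r(\alpha)^{m(r)} = 1$ for every $\alpha \in \Gamma(N)$ is equivalent to
\begin{equation*}
\sum_r m(r)\,\bigl[(s_1 s_2 + s_1 + s_2) - (s_1 r_2 - s_2 r_1)\bigr] \;\equiv\; 0 \pmod{2}
\end{equation*}
for all integer $\beta$ with $\det(I + N\beta) = 1$.

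Recognizing this family of constraints as exactly the three displayed congruences is the final step. Expanding in the monomials of $a',b',c',d'$ with $r_i = p_i/N$ and multiplying through by $N$, the condition becomes a congruence modulo $\gcd(2,N)\cdot N$ whose coefficients are quadratic in $(Np_1, Np_2) = (Nr_1, Nr_2)$. Testing $\alpha = \begin{pmatrix}1 & N \\ 0 & 1\end{pmatrix}$ isolates the $b'$-coefficient and yields the condition on $\sum m(r)(Nr_1)^2$; testing $\alpha = \begin{pmatrix}1 & 0 \\ N & 1\end{pmatrix}$ isolates the $c'$-coefficient and yields the condition on $\sum m(r)(Nr_2)^2$; and a further element carrying the combination $a'-d'$ produces the mixed condition on $\sum m(r)(Nr_1)(Nr_2)$. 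Since $\Gamma(N)$ is generated by such elementary matrices, these three conditions conversely suffice. The main obstacle is the parity bookkeeping in $(-1)^{s_1 s_2 + s_1 + s_2}$: its linear summand $s_1 + s_2$ forces the modulus to jump from $N$ to $2N$ exactly when $N$ is even, which is precisely the role of $\gcd(2,N)$; one also has to check that the quadratic piece $s_1 s_2$, combined with the determinant relation $a' + d' \equiv -N(a'd' - b'c')$, contributes no independent congruence beyond the three displayed.
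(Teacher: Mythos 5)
Your reduction is sound up to a point: each $\mathfrak{k}_r$ is holomorphic and nonvanishing on $\mathfrak{H}$, Proposition \ref{TransformKlein}(ii)--(iii) make meromorphy at every cusp automatic for any finite product, and for $\alpha\in\Gamma(N)$ combining (ii) with (i) gives $\mathfrak{k}_r|[\alpha]_{-1}=\varepsilon_r(\alpha)\mathfrak{k}_r$ exactly as you write, so modularity is equivalent to $\prod_r\varepsilon_r(\alpha)^{m(r)}=1$ for all $\alpha\in\Gamma(N)$. (This product is a homomorphism $\Gamma(N)\to\mathbb{C}^\times$ by the cocycle property of the slash operator; you use this implicitly when you reduce to generators, and it should be stated.) Note also that the paper itself gives no argument for this proposition: it cites Kubert--Lang \cite{K-L}, Chapter 3, Theorem 4.1, whose proof establishes the equivalence by computing the automorphy factor for an \emph{arbitrary} element of $\Gamma(N)$, not by a generators argument.

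The genuine gap is your closing claim that ``$\Gamma(N)$ is generated by such elementary matrices,'' on which the entire sufficiency direction rests. This is false for every $N\geq3$. The subgroup $\big\langle\left(\begin{smallmatrix}1&N\\0&1\end{smallmatrix}\right),\left(\begin{smallmatrix}1&0\\N&1\end{smallmatrix}\right)\big\rangle$ is free of rank $2$, whereas for $N\geq3$ the group $\Gamma(N)$ is torsion-free, hence isomorphic to the free fundamental group of the punctured modular curve $Y(N)$, of rank $2g+c-1$: rank $3$ for $N=3$, rank $5$ for $N=4$, rank $11$ for $N=5$, growing with $N$. By Nielsen--Schreier, a subgroup of strictly smaller rank has infinite index, so the two elementary matrices together with any bounded number of additional test matrices can never generate $\Gamma(N)$ once $N\geq4$ (and already for $N=2$ one needs $-I$ as well). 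Consequently your argument proves only the ``only if'' half: necessity does follow from evaluating the character on your test matrices, with the parity bookkeeping you describe (e.g.\ the condition from $\left(\begin{smallmatrix}1&N\\0&1\end{smallmatrix}\right)$ is $\sum_r m(r)\big((Nr_1)^2/N+Nr_1\big)\equiv0\pmod 2$, which one checks is equivalent to the displayed congruence for $\sum_r m(r)(Nr_1)^2$ using $x^2\equiv x\pmod2$). But the ``if'' half -- the substantive one -- is not established. To close it you cannot avoid the general computation: for $\alpha=I+N\beta$ with arbitrary integral $\beta=\left(\begin{smallmatrix}a'&b'\\c'&d'\end{smallmatrix}\right)$ satisfying $\det(I+N\beta)=1$, expand $\sum_r m(r)\big[(s_1s_2+s_1+s_2)-(s_1r_2-s_2r_1)\big]$ in $a',b',c',d'$ and show that the three displayed congruences, together with the determinant relation $a'+d'=N(b'c'-a'd')$ and parity identities, force this quantity into $2\mathbb{Z}$ for \emph{every} such $\beta$. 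That computation is precisely the content of the Kubert--Lang proof the paper invokes.
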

\begin{proof}
See \cite{K-L} Chapter 3 Theorem 4.1.
\end{proof}

\begin{remark}\label{firstremark}
Let $N\geq2$ and $r\in\frac{1}{N}\mathbb{Z}^2-\mathbb{Z}^2$. Then
$\mathfrak{k}_r(\tau)$ (respectively, $\mathfrak{k}_r(\tau)^{2N}$)
is a nearly holomorphic modular form for $\Gamma(2N^2)$
(respectively, $\Gamma(N)$) of weight $-1$ (respectively, $-2N$).
\end{remark}

Now we shall develop a modularity criterion for the congruence
subgroup $\Gamma_1(N)$.

\begin{lemma}\label{distribution}
For an integer $N\geq2$ let $t$ be an integer with
$t\not\equiv0\pmod{N}$. Then we have the relation
\begin{equation*}
\mathfrak{k}_{(\frac{t}{N},0)}(N\tau)= Ne^{\frac{\pi
it}{2}(\frac{1}{N}-1)}
\prod_{n=0}^{N-1}\mathfrak{k}_{(\frac{t}{N},\frac{n}{N})}(\tau)
\prod_{n=1}^{N-1}\mathfrak{k}_{(0,\frac{n}{N})}(\tau)^{-1}.
\end{equation*}
\end{lemma}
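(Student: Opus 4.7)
The plan is to prove the identity by direct comparison of the infinite product expansions on both sides, using only the definition \eqref{KleinForm} of the Klein form together with the cyclotomic identity $\prod_{n=0}^{N-1}(1-x\zeta_N^n) = 1-x^N$, where $\zeta_N = e^{2\pi i/N}$. Writing $q = e^{2\pi i\tau}$, substitution of $N\tau$ into \eqref{KleinForm} turns the internal parameter into $q^N$ while $z = (t/N)(N\tau) = t\tau$ gives $q_z = q^t$; the $q$-exponent becomes $\tfrac{N}{2}\cdot\tfrac{t}{N}(\tfrac{t}{N}-1) = \tfrac{t^2}{2N}-\tfrac{t}{2}$, and the phase is trivial, so the left-hand side expands to
\[
\mathfrak{k}_{(t/N,0)}(N\tau) = q^{\frac{t^2}{2N}-\frac{t}{2}}(1-q^t)\prod_{m=1}^{\infty}\frac{(1-q^{Nm+t})(1-q^{Nm-t})}{(1-q^{Nm})^{2}}.
\]

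Next I would unpack the two products on the right. For $\mathfrak{k}_{(t/N,n/N)}(\tau)$ one has $q_z = q^{t/N}\zeta_N^n$; taking the product over $n=0,\ldots,N-1$ makes the phases sum to $e^{\pi i(N-1)(t/N-1)/2}$ and the $q$-exponents to $\tfrac{t^2}{2N}-\tfrac{t}{2}$, while the cyclotomic identity applied to $\zeta_N^{\pm n}$ (using that $-n$ also sweeps a complete residue system) collapses the three $\zeta_N$-dependent factors into $(1-q^t)$, $(1-q^{Nm+t})$, $(1-q^{Nm-t})$, leaving an excess of $\prod_m(1-q^m)^{-2N}$. For the correction factor $\prod_{n=1}^{N-1}\mathfrak{k}_{(0,n/N)}(\tau)^{-1}$, the $q$-exponent is zero; the scalar part $\prod_{n=1}^{N-1}(1-\zeta_N^n) = N$, after inversion, supplies the $1/N$ that pairs with the explicit prefactor $N$; the phase contributes $e^{\pi i(N-1)/2}$; and the infinite products combine with the surplus $(1-q^m)^{-2N}$ to yield exactly $\prod_m(1-q^{Nm})^{-2}$, matching the left-hand side.

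Finally I would gather phases. The two phase contributions multiply to $e^{\pi i(N-1)(t/N-1)/2}\cdot e^{\pi i(N-1)/2} = e^{\pi it(N-1)/(2N)}$, and its inverse is precisely $e^{\frac{\pi it}{2}(1/N-1)}$, the phase appearing in the statement. The only genuine obstacle is careful bookkeeping of the phases, $q$-exponents, and the distinction between the ranges $0\leq n\leq N-1$ and $1\leq n\leq N-1$, so that all the auxiliary constants and roots of unity on the right reassemble cleanly into the simpler product on the left without sign errors.
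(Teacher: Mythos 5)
Your proposal is correct and follows essentially the same route as the paper's own proof: both sides are expanded via the definition \eqref{KleinForm}, the products over $n$ are collapsed with the cyclotomic identity $\prod_{n}(1-\zeta_N^n X)=1-X^N$ (the paper's \eqref{trivialidentity}), and the phases $e^{\pi i(N-1)(t/N-1)/2}$, $e^{\pi i(N-1)/2}$, the constant $\prod_{n=1}^{N-1}(1-\zeta_N^n)=N$, and the $q$-exponents are tracked exactly as you describe. All the key quantities in your outline match the paper's computation, so writing out the bookkeeping would reproduce its proof.
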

\begin{proof}
By the definition (\ref{KleinForm}) we have
\begin{equation*}
\mathfrak{k}_{(\frac{t}{N},0)}(N\tau)=q^{\frac{t}{2}(\frac{t}{N}-1)}
(1-q^t)\prod_{m=1}^\infty(1-q^{Nm+t})(1-q^{Nm-t})(1-q^{Nm})^{-2},
\end{equation*}
and
\begin{eqnarray*}
&&\prod_{n=0}^{N-1}\mathfrak{k}_{(\frac{t}{N},\frac{n}{N})}(\tau)
\prod_{n=1}^{N-1}\mathfrak{k}_{(0,\frac{n}{N})}(\tau)^{-1}\\
&=&\prod_{n=0}^{N-1}\bigg(e^{\frac{\pi in}{N}(\frac{t}{N}-1)}
q^{\frac{t}{2N}(\frac{t}{N}-1)}(1-e^{\frac{2\pi
in}{N}}q^\frac{t}{N})\prod_{m=1}^{\infty} (1-e^\frac{2\pi
in}{N}q^{m+\frac{t}{N}}) (1-e^{-\frac{2\pi
in}{N}}q^{m-\frac{t}{N}})(1-q^m)^{-2}\bigg)\\
&&\times \prod_{n=1}^{N-1}\bigg( e^{-\frac{\pi
in}{N}}(1-e^\frac{2\pi in}{N})\prod_{m=1}^\infty(1-e^\frac{2\pi
in}{N}q^m)(1-e^{-\frac{2\pi in}{N}}q^m)(1-q^m)^{-2}\bigg)^{-1}\\
&=&e^{\frac{\pi i(N-1)}{2}(\frac{t}{N}-1)}q^{\frac{t}{2}(\frac{t}{N}-1)}(1-q^t)\prod_{m=1}^\infty(1-q^{Nm+t})(1-q^{Nm-t})(1-q^m)^{-2N}\\
&&\times \bigg(e^{-\frac{\pi i(N-1)}{2}}N
\prod_{m=1}^\infty(1-q^{Nm})(1-q^m)^{-1}(1-q^{Nm})(1-q^m)^{-1}(1-q^m)^{-2(N-1)}\bigg)^{-1}\\
&=&e^{\frac{\pi it}{2}(1-\frac{1}{N})}N^{-1}
q^{\frac{t}{2}(\frac{t}{N}-1)} (1-q^t)
\prod_{m=1}^\infty(1-q^{Nm+t})(1-q^{Nm-t})(1-q^{Nm})^{-2}
\end{eqnarray*}
by using the identity
\begin{equation}\label{trivialidentity}
1-X^N=(1-\zeta_NX)(1-\zeta_N^2X)\cdots(1-\zeta_N^NX)\quad\textrm{where}~\zeta_N=e^{\frac{2\pi
i}{N}}.
\end{equation}
Hence we get the assertion.
\end{proof}

\begin{lemma}\label{Bernoulli}
For $y\in\mathbb{Q}$ and an integer $D\geq1$ we have
$$\sum_{\begin{smallmatrix}x\pmod{\mathbb{Z}}\\Dx\equiv y\pmod{\mathbb{Z}}\end{smallmatrix}}\mathbf{B}_2\big(\langle
x\rangle\big)=D^{-1}\mathbf{B}_2\big(\langle y\rangle\big).$$
\end{lemma}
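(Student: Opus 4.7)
The plan is to recognize this as the classical distribution relation for the second Bernoulli polynomial and reduce it to a direct computation.

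First I would parametrize the summation set. The $x \in \mathbb{Q}/\mathbb{Z}$ satisfying $Dx \equiv y \pmod{\mathbb{Z}}$ are precisely
\begin{equation*}
x_k = \frac{y+k}{D} \pmod{\mathbb{Z}}, \qquad k = 0, 1, \dots, D-1,
\end{equation*}
each class appearing exactly once. Next I would observe that both sides of the claimed identity depend on $y$ only modulo $\mathbb{Z}$: the left-hand side because the set $\{x_k \bmod \mathbb{Z}\}$ is unchanged when $y$ is replaced by $y+1$, and the right-hand side because $\langle \cdot \rangle$ is $\mathbb{Z}$-periodic. Therefore I may assume $0 \leq y < 1$, whence $0 \leq (y+k)/D < 1$ for each $k=0,\dots,D-1$, so that $\langle x_k \rangle = (y+k)/D$ and $\langle y \rangle = y$. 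This removes the fractional-part brackets and reduces the claim to
\begin{equation*}
\sum_{k=0}^{D-1}\mathbf{B}_2\!\left(\frac{y+k}{D}\right) \;=\; \frac{1}{D}\,\mathbf{B}_2(y).
\end{equation*}

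At this point it is just a direct algebraic verification using $\mathbf{B}_2(X)=X^2 - X + 1/6$ and the closed-form expressions for $\sum_{k=0}^{D-1}k$ and $\sum_{k=0}^{D-1}k^2$. Expanding $(y+k)^2$, the quadratic and linear pieces combine to $y^2/D - y/D$, and the constant terms collapse via
\begin{equation*}
\frac{(D-1)(2D-1)}{6D} - \frac{D-1}{2} + \frac{D}{6} \;=\; \frac{1}{6D},
\end{equation*}
yielding $\frac{1}{D}(y^2 - y + 1/6) = \frac{1}{D}\mathbf{B}_2(y)$ as required.

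There is really no conceptual obstacle here; the only thing to be careful about is the reduction step ensuring that every $(y+k)/D$ lands in $[0,1)$ so that the fractional-part brackets disappear simultaneously on both sides. Once that is in place the identity is an elementary polynomial computation, which is exactly the $n=2$ case of the standard distribution relation $\sum_{k=0}^{D-1}\mathbf{B}_n(x+k/D) = D^{1-n}\mathbf{B}_n(Dx)$ applied at $x = y/D$.
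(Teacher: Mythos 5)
Your proof is correct. It is worth noting that the paper does not actually prove this lemma at all: its entire ``proof'' is a citation to Lemma 6.3 of Koo--Shin, \emph{On some arithmetic properties of Siegel functions} (Math.\ Zeit.\ 264 (2010)), so any self-contained argument is necessarily a different route. Your argument is the standard one for the distribution (multiplication) relation of Bernoulli polynomials, and each step checks out: the solution set of $Dx\equiv y\pmod{\mathbb{Z}}$ is exactly $\{(y+k)/D : k=0,\dots,D-1\}$ with no repetitions modulo $\mathbb{Z}$; both sides are invariant under $y\mapsto y+1$, which legitimizes the normalization $0\le y<1$; and under that normalization every $(y+k)/D$ lies in $[0,1)$, so the fractional-part brackets vanish and the identity becomes the polynomial computation
\begin{equation*}
\sum_{k=0}^{D-1}\mathbf{B}_2\!\left(\frac{y+k}{D}\right)
=\frac{y^2}{D}-\frac{y}{D}+\frac{1}{6D}
=\frac{1}{D}\,\mathbf{B}_2(y),
\end{equation*}
whose constant term collapses exactly as you indicate. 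What your approach buys is self-containedness and transparency: the reader sees that the lemma is nothing deeper than the $n=2$ multiplication theorem $\sum_{k=0}^{D-1}\mathbf{B}_n\bigl(x+\tfrac{k}{D}\bigr)=D^{1-n}\mathbf{B}_n(Dx)$, whereas the paper's citation buys brevity and places the lemma in the context of Siegel units, where such distribution relations are proved in greater generality. The one step that genuinely required care---ensuring all fractional parts can be stripped simultaneously---is precisely the step you flagged and justified.
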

\begin{proof}
See \cite{K-S} Lemma 6.3.
\end{proof}

\begin{theorem}\label{modularity2}
For an integer $N\geq2$, let $\{m(t)\}_{t=1}^{N-1}$ be a family of
integers. Then the product
\begin{equation*}
\prod_{t=1}^{N-1}\mathfrak{k}_{(\frac{t}{N},0)}(N\tau)^{m(t)}
\end{equation*}
is a nearly holomorphic modular form for $\Gamma_1(N)$ of weight
$k=-\sum_{t=1}^{N-1} m(t)$ if
\begin{equation}\label{condition}
\sum_{t=1}^{N-1} m(t)t^2\equiv0\pmod{\gcd(2,N)\cdot N}.
\end{equation}
Furthermore, for
$\alpha=\begin{pmatrix}a&b\\c&d\end{pmatrix}\in\mathrm{SL}_2(\mathbb{Z})$
we achieve
\begin{equation}\label{order}
\mathrm{ord}_{q}\bigg(\prod_{t=1}^{N-1}\mathfrak{k}_{(\frac{t}{N},0)}(N\tau)^{m(t)}
|[\alpha]_k\bigg)=\frac{\gcd(c,N)^2}{2N}\sum_{t=1}^{N-1} m(t)
\bigg\langle\frac{at}{\gcd(c,N)}\bigg\rangle\bigg(\bigg\langle\frac{at}{\gcd(c,N)}\bigg\rangle-1\bigg).
\end{equation}
\end{theorem}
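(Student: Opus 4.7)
The plan is to reduce everything to Klein forms of level $N$ evaluated at $\tau$ via Lemma~\ref{distribution}. Substituting that identity into each factor, the product $f(\tau)=\prod_{t=1}^{N-1}\mathfrak{k}_{(t/N,0)}(N\tau)^{m(t)}$ equals, up to a nonzero constant, a product of level-$N$ Klein forms with exponents $m(t/N,n/N)=m(t)$ for $1\le t\le N-1$, $0\le n\le N-1$ and $m(0,n/N)=k$ for $1\le n\le N-1$. I then check the three congruences of Proposition~\ref{modularity}. The sum $\sum_r m(r)(Nr_2)^2$ vanishes automatically from the identity $\sum_t m(t)+k=0$. The sum $\sum_r m(r)(Nr_1)^2$ collapses to $N\sum_t m(t)t^2$, so its congruence modulo $\gcd(2,N)\cdot N$ follows at once from hypothesis (\ref{condition}). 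The sum $\sum_r m(r)(Nr_1)(Nr_2)$ equals $\tfrac{N(N-1)}{2}\sum_t m(t)t$, which is divisible by $N$ automatically for $N$ odd; for $N$ even, (\ref{condition}) forces $\sum_t m(t)t^2$ to be even, hence $\sum_t m(t)t$ as well since $t^2\equiv t\pmod 2$. Thus $f$ is a nearly holomorphic modular form for $\Gamma(N)$ of weight $k$.

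To upgrade from $\Gamma(N)$ to $\Gamma_1(N)$ it suffices to verify invariance under $T=\left(\begin{smallmatrix}1&1\\0&1\end{smallmatrix}\right)$, since $\Gamma_1(N)$ is generated by $\Gamma(N)$ together with $T$. Using Proposition~\ref{TransformKlein}~(i)--(ii), I compute $\mathfrak{k}_{(t/N,0)}(N\tau+N)=(-1)^{t}e^{\pi it^2/N}\mathfrak{k}_{(t/N,0)}(N\tau)$, giving $f(\tau+1)/f(\tau)=(-1)^{\sum m(t)t}e^{\pi i\sum m(t)t^2/N}$. When $N$ is even, $\sum m(t)t^2\equiv 0\pmod{2N}$ immediately makes the ratio equal to $1$ (again using $t^2\equiv t\pmod 2$). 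When $N$ is odd, writing $\sum m(t)t^2=\ell N$ reduces the ratio to $(-1)^{\sum m(t)t+\ell}$; since $N$ is odd, $\ell\equiv \ell N=\sum m(t)t^2\equiv\sum m(t)t\pmod 2$, so the exponent is even.

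Finally, for the order formula, I apply $\alpha$ to both sides of Lemma~\ref{distribution} and invoke Proposition~\ref{TransformKlein}(ii) to rewrite each Klein form at $\alpha\tau$ as $(c\tau+d)^{-1}$ times a Klein form of level $N$ at $\tau$; the numerator and denominator contributions from Lemma~\ref{distribution} combine so that the $(c\tau+d)$-factors cancel exactly the $(c\tau+d)^{-k}$ of the slash operator. By Proposition~\ref{TransformKlein}(iii) the resulting $q$-order is
\begin{equation*}
\sum_{t=1}^{N-1}\tfrac{m(t)}{2}\left[\sum_{n=0}^{N-1}\Bigl(\mathbf{B}_2\bigl(\bigl\langle\tfrac{ta+nc}{N}\bigr\rangle\bigr)-\tfrac{1}{6}\Bigr)-\sum_{n=1}^{N-1}\Bigl(\mathbf{B}_2\bigl(\bigl\langle\tfrac{nc}{N}\bigr\rangle\bigr)-\tfrac{1}{6}\Bigr)\right].
\end{equation*}
Writing $g=\gcd(c,N)$ and $D=N/g$, the map $n\mapsto(ta+nc)/N\pmod{\mathbb{Z}}$ hits each of $D$ distinct residues with multiplicity $g$; Lemma~\ref{Bernoulli} applied with $y=at/g$ yields $\sum_{n=0}^{N-1}\mathbf{B}_2(\langle(ta+nc)/N\rangle)=(g^2/N)\mathbf{B}_2(\langle at/g\rangle)$, and the case $t=0$ disposes of the denominator. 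The $-1/6$ shifts combine with the two $\mathbf{B}_2(0)=1/6$ contributions to cancel identically, and the identity $\mathbf{B}_2(x)-1/6=x(x-1)$ yields (\ref{order}). The main technical difficulty is this bookkeeping of Bernoulli constants; the most delicate point elsewhere is the parity check in the odd-$N$ case of $T$-invariance, where $t^2\equiv t\pmod 2$ is essential.
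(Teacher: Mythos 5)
Your proof is correct and follows essentially the same route as the paper's: reduce via Lemma~\ref{distribution}, verify the Kubert--Lang congruences of Proposition~\ref{modularity} (using $t^2\equiv t\pmod{2}$), upgrade to $\Gamma_1(N)$ by checking invariance under $T$ since $\Gamma_1(N)$ is generated by $\Gamma(N)$ and $T$, and compute the cusp orders with Proposition~\ref{TransformKlein}(ii)--(iii) together with Lemma~\ref{Bernoulli}. The only cosmetic difference is that you compute the $T$-transformation directly on $\prod_{t}\mathfrak{k}_{(\frac{t}{N},0)}(N\tau)^{m(t)}$ rather than on the expanded level-$N$ product as the paper does; this is slightly cleaner bookkeeping but the same argument.
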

\begin{proof}
By Lemma \ref{distribution} we may prove the assertions for the
function
\begin{equation*}
\mathfrak{k}(\tau)=
\prod_{t=1}^{N-1}\bigg(\prod_{n=0}^{N-1}\mathfrak{k}_{(\frac{t}{N},\frac{n}{N})}(\tau)
\prod_{n=1}^{N-1}\mathfrak{k}_{(0,\frac{n}{N})}(\tau)^{-1}\bigg)^{m(t)}.
\end{equation*}
Assume the condition (\ref{condition}) and set
\begin{equation*}
\mathfrak{k}(\tau)=\prod_{r=(r_1,r_2)\in\frac{1}{N}\mathbb{Z}^2-\mathbb{Z}^2}
\mathfrak{k}_r(\tau)^{\ell(r)}.
\end{equation*}
Then we get that
\begin{eqnarray*}
&&\sum_r \ell(r)(Nr_1)^2=N\sum_{t=1}^{N-1}m(t)t^2\equiv0\pmod{\gcd(2,N)\cdot N}\\
&&\sum_r \ell(r)(Nr_2)^2=0\equiv0\pmod{\gcd(2,N)\cdot N}\\
&&\sum_r
\ell(r)(Nr_1)(Nr_2)=\frac{N(N-1)}{2}\sum_{t=1}^{N-1}m(t)t\equiv0\pmod{N}
\end{eqnarray*}
by the condition (\ref{condition}) and the fact $\sum_t
m(t)t\equiv\sum_t m(t)t^2\pmod{2}$. This shows that
$\mathfrak{k}(\tau)$ is a nearly holomorphic modular form for
$\Gamma(N)$ of weight $k=-\sum_{t=1}^{N-1} m(t)$ by Proposition
\ref{modularity}.
\par On the other hand, we know that
$\Gamma_1(N)$ is generated by $\Gamma(N)$ and
$T=\begin{pmatrix}1&1\\0&1\end{pmatrix}$. Thus, we derive that
\begin{eqnarray*}
&&\mathfrak{k}(\tau)|[T]_k\\
&=&\prod_{t=1}^{N-1}\bigg(\prod_{n=0}^{N-1}\mathfrak{k}_{(\frac{t}{N},\frac{t+n}{N})}(\tau)
\prod_{n=1}^{N-1}\mathfrak{k}_{(0,\frac{n}{N})}(\tau)^{-1}\bigg)^{m(t)}\quad
\textrm{by Proposition \ref{TransformKlein}(ii)}\\
&=&\prod_{t=1}^{N-1}\bigg(\prod_{n=0}^{N-1-t}\mathfrak{k}_{(\frac{t}{N},\frac{t+n}{N})}(\tau)
\prod_{n=N-t}^{N-1}\mathfrak{k}_{(\frac{t}{N},\frac{t+n-N}{N})+(0,1)}(\tau)
\prod_{n=1}^{N-1}\mathfrak{k}_{(0,\frac{n}{N})}(\tau)^{-1}\bigg)^{m(t)}\\
&=&\prod_{t=1}^{N-1}\bigg(\prod_{n=0}^{N-1-t}\mathfrak{k}_{(\frac{t}{N},\frac{t+n}{N})}(\tau)
\prod_{n=N-t}^{N-1}(-e^{-\pi
i\frac{t}{N}})\mathfrak{k}_{(\frac{t}{N},\frac{t+n-N}{N})}(\tau)
\prod_{n=1}^{N-1}\mathfrak{k}_{(0,\frac{n}{N})}(\tau)^{-1}\bigg)^{m(t)}\quad\textrm{by
Proposition \ref{TransformKlein}(i)}\\
&=&(-1)^{\sum_t m(t)t}e^{-\pi i\frac{1}{N}\sum_t m(t)t^2}\mathfrak{k}(\tau)\\
&=&\mathfrak{k}(\tau)\quad\textrm{by the condition (\ref{condition})
and the fact $\textstyle\sum_t m(t)t\equiv\sum_t m(t)t^2\pmod{2}$}.
\end{eqnarray*}
Therefore $\mathfrak{k}(\tau)$ is a nearly holomorphic modular form
for $\Gamma_1(N)$ of weight $k=-\sum_t m(t)$.
\par
Now, for
$\alpha=\begin{pmatrix}a&b\\c&d\end{pmatrix}\in\mathrm{SL}_2(\mathbb{Z})$
we deduce that
\begin{eqnarray*}
&&\mathrm{ord}_{q}(\mathfrak{k}(\tau)|[\alpha]_k)\\
&=&\mathrm{ord}_{q}\bigg(
\prod_{t=1}^{N-1}\bigg(\prod_{n=0}^{N-1}\mathfrak{k}_{(\frac{at+cn}{N},\frac{bt+dn}{N})}(\tau)
\prod_{n=1}^{N-1}\mathfrak{k}_{(\frac{cn}{N},\frac{dn}{N})}(\tau)^{-1}\bigg)^{m(t)}\bigg)\quad
\textrm{by Proposition \ref{TransformKlein}(ii)}\\
&=&\sum_{t=1}^{N-1}m(t)\bigg\{\sum_{n=0}^{N-1}\frac{1}{2}\bigg(\mathbf{B}_2\bigg(\bigg\langle\frac{at+cn}{N}\bigg\rangle
\bigg)-\frac{1}{6}\bigg)-\sum_{n=1}^{N-1}\frac{1}{2}\bigg(\mathbf{B}_2\bigg(\bigg\langle\frac{cn}{N}\bigg\rangle
\bigg)-\frac{1}{6}\bigg)\bigg\}\quad\textrm{by Proposition \ref{TransformKlein}(iii)}\\
&=&\frac{1}{2}\sum_{t=1}^{N-1}m(t)\bigg\{\sum_{n=1}^{N}\mathbf{B}_2\bigg(\bigg\langle\frac{at+cn}{N}\bigg\rangle
\bigg)-\sum_{n=1}^{N}\mathbf{B}_2\bigg(\bigg\langle\frac{cn}{N}\bigg\rangle
\bigg)\bigg\}\quad\textrm{by the fact
$\mathbf{B}_2(0)=\frac{1}{6}$}\\
&=&\frac{\gcd(c,N)}{2}\sum_{t=1}^{N-1}m(t)\bigg\{\sum_{n=1}^D\mathbf{B}_2\bigg(\bigg\langle\frac{at/\gcd(c,N)}{D}+\frac{c/\gcd(c,N)}{D}n\bigg\rangle\bigg)-
\sum_{n=1}^D\mathbf{B}_2\bigg(\bigg\langle\frac{c/\gcd(c,N)}{D}n\bigg\rangle\bigg)\bigg\}\\
&&\textrm{where}~D=\frac{N}{\gcd(c,N)}.
\end{eqnarray*}
If we apply Lemma \ref{Bernoulli} with $D=\frac{N}{\gcd(c,N)}$,
$y=\frac{at}{\gcd(c,N)}$ and $x=\frac{y}{D}+\frac{c/\gcd(c,N)}{D}n$
with $1\leq n\leq D$, then we obtain
\begin{equation*}
\sum_{n=1}^D\mathbf{B}_2\bigg(\bigg\langle\frac{at/\gcd(c,N)}{D}+\frac{c/\gcd(c,N)}{D}n\bigg\rangle\bigg)
=\frac{\gcd(c,N)}{N}\mathbf{B}_2\bigg(\bigg\langle\frac{at}{\gcd(c,N)}\bigg\rangle\bigg).
\end{equation*}
Likewise, if we set $D=\frac{N}{\gcd(c,N)}$, $y=0$ and
$x=\frac{c/\gcd(c,N)}{D}$ with $1\leq n\leq D$ in Lemma
\ref{Bernoulli}, then we get
\begin{equation}\label{cf}
\sum_{n=1}^D\mathbf{B}_2\bigg(\bigg\langle\frac{c/\gcd(c,N)}{D}n\bigg\rangle\bigg)
=\frac{\gcd(c,N)}{N}\mathbf{B}_2(0).
\end{equation}
Hence we achieve
\begin{eqnarray*}
\mathrm{ord}_{q}(\mathfrak{k}(\tau)|[\alpha]_k) &=&
\frac{\gcd(c,N)^2}{2N}\sum_{t=1}^{N-1}m(t)\bigg\{
\mathbf{B}_2\bigg(\bigg\langle\frac{at}{\gcd(c,N)}\bigg\rangle\bigg)-\mathbf{B}_2(0)\bigg\}
\\
&=&\frac{\gcd(c,N)^2}{2N}\sum_{t=1}^{N-1}m(t)
\bigg\langle\frac{at}{\gcd(c,N)}\bigg\rangle\bigg(\bigg\langle\frac{at}{\gcd(c,N)}\bigg\rangle-1\bigg),
\end{eqnarray*}
as desired.
\end{proof}

\begin{corollary}\label{squarelevel}
Let $N\geq2$ be a square integer. Then the function
\begin{equation*}
\mathfrak{k}_{(\frac{\sqrt{N}}{N},0)}(N\tau)^{-2}
\end{equation*}
belongs to $M_2(\Gamma_1(N))$.
\end{corollary}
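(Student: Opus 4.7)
The plan is to deduce this corollary as a direct application of Theorem \ref{modularity2} specialized to a single nontrivial weight. Write $N = M^2$ with $M \geq 2$ integer, so that $\sqrt{N}/N = 1/M$. I take the family $\{m(t)\}_{t=1}^{N-1}$ with $m(M) = -2$ and $m(t) = 0$ otherwise; then the product in Theorem \ref{modularity2} collapses to precisely $\mathfrak{k}_{(\sqrt{N}/N,0)}(N\tau)^{-2}$, and the weight there is $k = -\sum_t m(t) = 2$, matching the statement.

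The first step is to verify the congruence hypothesis (\ref{condition}). With this choice of $m$, $\sum_t m(t)t^2 = -2M^2 = -2N$, which is clearly divisible by $\gcd(2,N)\cdot N$ regardless of the parity of $N$, since $\gcd(2,N)\cdot N$ divides $2N$. Theorem \ref{modularity2} then certifies that the function is a nearly holomorphic modular form for $\Gamma_1(N)$ of weight $2$.

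The one remaining point, and really the only substantive step, is to upgrade \emph{nearly holomorphic} to \emph{holomorphic}, i.e., to check that the order at every cusp is nonnegative. For this I invoke the second assertion (\ref{order}) of Theorem \ref{modularity2}: for any $\alpha=\begin{pmatrix}a&b\\c&d\end{pmatrix}\in\mathrm{SL}_2(\mathbb{Z})$,
\[
\mathrm{ord}_{q}\bigl(\mathfrak{k}_{(\sqrt{N}/N,0)}(N\tau)^{-2}|[\alpha]_2\bigr) = -\frac{\gcd(c,N)^2}{N}\Bigl\langle\tfrac{aM}{\gcd(c,N)}\Bigr\rangle\Bigl(\Bigl\langle\tfrac{aM}{\gcd(c,N)}\Bigr\rangle-1\Bigr).
\]
Since $0\le\langle x\rangle<1$ forces $\langle x\rangle(\langle x\rangle-1)\le 0$, the minus sign out front makes the whole expression nonnegative. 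Combined with the fact from Section 1 that Klein forms have neither zeros nor poles on $\mathfrak{H}$, this gives holomorphy on $\mathfrak{H}$ and at every cusp, so the function lies in $M_2(\Gamma_1(N))$.

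I don't anticipate a real obstacle: once Theorem \ref{modularity2} and its explicit cusp-order formula are in hand, the corollary is a bookkeeping exercise. The slightly delicate point is simply recognizing that the specific weight $m(M) = -2$ is chosen so that both (\ref{condition}) holds with plenty of room and the sign in (\ref{order}) comes out in the favorable direction.
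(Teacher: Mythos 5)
Your proposal is correct and follows essentially the same route as the paper: specialize Theorem \ref{modularity2} to the family with the single nonzero exponent $m(\sqrt{N})=-2$, check condition (\ref{condition}) via $\sum_t m(t)t^2=-2N\equiv0\pmod{\gcd(2,N)\cdot N}$, and then use the cusp-order formula (\ref{order}) together with $0\le\langle x\rangle<1$ to conclude nonnegativity of the order at every cusp, upgrading nearly holomorphic to holomorphic. The paper's proof is the same computation, merely written with the factor $\bigl(1-\langle\cdot\rangle\bigr)$ in place of your overall minus sign.
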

\begin{proof}
Let $\mathfrak{k}(\tau)$ be the above function. Since
$\mathfrak{k}(\tau)$ satisfies the condition (\ref{condition}), it
is a nearly holomorphic modular form for $\Gamma_1(N)$ of weight $2$
by Theorem \ref{modularity2}. For any
$\alpha=\begin{pmatrix}a&b\\c&d\end{pmatrix}\in\mathrm{SL}_2(\mathbb{Z})$
we then get by the order formula (\ref{order})
\begin{equation*}
\mathrm{ord}_{q}(\mathfrak{k}(\tau)|[\alpha]_2)=\frac{\gcd(c,N)^2}{N}
\bigg\langle\frac{a\sqrt{N}}{\gcd(c,N)}\bigg\rangle\bigg(1-\bigg\langle\frac{a\sqrt{N}}{\gcd(c,N)}\bigg\rangle\bigg),
\end{equation*}
which is nonnegative. This implies that the order of
$\mathfrak{k}(\tau)$ at every cusp is nonnegative; hence
$\mathfrak{k}(\tau)$ is indeed a modular form.
\end{proof}

Next we find a family of modular forms for $\Gamma_0(N)$ which are
in fact quotients of the Dedekind eta-functions.

\begin{theorem}\label{etaquotient}
For an integer $N\geq2$ the function
\begin{equation*}
\prod_{n=1}^{N-1}\mathfrak{k}_{(0,
\frac{n}{N})}(\tau)^{\frac{-12}{\gcd(12, N-1)}}
\end{equation*}
is a modular form for $\Gamma_0(N)$ of weight
$\frac{12(N-1)}{\gcd(12,N-1)}$.
\end{theorem}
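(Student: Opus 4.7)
Set $s=\gcd(12,N-1)$ and $e=-12/s\in\mathbb{Z}$, so the exponent is $e$ and the claimed weight reads $k=-e(N-1)=12(N-1)/s$. My strategy mirrors the proof of Theorem~\ref{modularity2}: first establish nearly holomorphic modularity for $\Gamma(N)$ via Proposition~\ref{modularity}, then upgrade to $\Gamma_0(N)$-invariance through a direct transformation calculation, and finally confirm holomorphy at every cusp.

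For $\Gamma(N)$-modularity I apply Proposition~\ref{modularity} to the family $m((0,n/N))=e$ for $1\le n\le N-1$ (and $m(r)=0$ otherwise). Since $r_1=0$ throughout the support, two of the three congruences hold trivially; the remaining one, $e\cdot N(N-1)(2N-1)/6\equiv 0\pmod{\gcd(2,N)\cdot N}$, reduces to the divisibility of $-2(N-1)(2N-1)/s$ by $\gcd(2,N)$, which is immediate because $(N-1)/s\in\mathbb{Z}$ and the explicit factor $2$ already supplies the parity.

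The upgrade to $\Gamma_0(N)$ is the substantive step. For $\alpha=\begin{pmatrix}a&b\\c&d\end{pmatrix}\in\Gamma_0(N)$ with $c=Nc'$, part (ii) of Proposition~\ref{TransformKlein} gives $\mathfrak{k}_{(0,n/N)}(\tau)|[\alpha]_{-1}=\mathfrak{k}_{(c'n,dn/N)}(\tau)$. I then apply part (i) of Proposition~\ref{TransformKlein} twice---translating first by $(c'n,0)\in\mathbb{Z}^2$ and next by $(0,q_n)\in\mathbb{Z}^2$, where $dn=Nq_n+r_n$ with $1\le r_n\le N-1$---to rewrite
\[
\mathfrak{k}_{(c'n,dn/N)}(\tau)=(-1)^{c'n+q_n}\,e^{-\pi i c'dn^2/N}\,\mathfrak{k}_{(0,r_n/N)}(\tau).
\]
Since $\gcd(d,N)=1$, the assignment $n\mapsto r_n$ permutes $\{1,\ldots,N-1\}$, so after $\prod_n$ the Klein forms reassemble into the original $\mathfrak{k}(\tau)$. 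Summing exponents, the surviving constant of proportionality is $(-1)^{(c+d-1)(N-1)/2}\,e^{-\pi i c'd(N-1)(2N-1)/6}$, and raising to the $e$th power yields
\[
(-1)^{-6(c+d-1)(N-1)/s}\,e^{2\pi i c'd(N-1)(2N-1)/s}.
\]
The first exponent is an even integer (the prefactor $-6$ supplies the $2$) and the second is an integer (because $(N-1)/s\in\mathbb{Z}$), so the constant equals $1$ and $\mathfrak{k}(\tau)|[\alpha]_k=\mathfrak{k}(\tau)$.

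Finally, for any $\alpha=\begin{pmatrix}a&b\\c&d\end{pmatrix}\in\mathrm{SL}_2(\mathbb{Z})$, combining parts (ii) and (iii) of Proposition~\ref{TransformKlein} yields $\mathrm{ord}_q\bigl(\mathfrak{k}(\tau)|[\alpha]_k\bigr)=\tfrac{e}{2}\sum_{n=1}^{N-1}\langle cn/N\rangle(\langle cn/N\rangle-1)\ge 0$, because $e<0$ while every summand lies in $[-1/4,0]$; this gives holomorphy at every cusp and completes the proof. The main obstacle is the phase-tracking in the $\Gamma_0(N)$-invariance upgrade: the specific value $e=-12/\gcd(12,N-1)$ is sharp, for the numerator $12$ is precisely what trivialises the $(-1)$-factor while the definition of $s$ is what trivialises the exponential factor.
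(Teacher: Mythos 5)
Your proof is correct, and its core coincides with the paper's: the $\Gamma_0(N)$-invariance is established by exactly the same computation, namely Proposition \ref{TransformKlein}(ii) followed by (i), the permutation $n\mapsto dn\bmod N$ (valid since $\gcd(d,N)=1$), and the observation that raising the residual constant $(-1)^{(c+d-1)(N-1)/2}e^{-\pi i c'd(N-1)(2N-1)/6}$ to the power $-12/\gcd(12,N-1)$ gives $1$, the numerator $12$ killing the sign and the denominator $\gcd(12,N-1)$ killing the exponential. You deviate from the paper in two places. First, your preliminary appeal to Proposition \ref{modularity} to obtain $\Gamma(N)$-modularity (which you do verify correctly) is logically redundant: unlike in Theorem \ref{modularity2}, where the paper checks invariance only under the extra generator $T$ and therefore needs the $\Gamma(N)$ statement, your transformation computation here, like the paper's, treats every $\alpha\in\Gamma_0(N)$ directly, so nothing from that preliminary step is ever used. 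Second, your treatment of the cusps is genuinely more elementary than the paper's: you note that the order equals $\tfrac{e}{2}\sum_{n=1}^{N-1}\langle cn/N\rangle\big(\langle cn/N\rangle-1\big)$ with $e<0$ and every summand lying in $[-1/4,0]$, hence is nonnegative, whereas the paper rewrites the sum in terms of $\mathbf{B}_2$ and applies the distribution relation of Lemma \ref{Bernoulli} to evaluate it exactly, obtaining $\frac{N^2-\gcd(z,N)^2}{\gcd(12,N-1)\cdot N}$. Your sign argument suffices for the theorem and avoids Lemma \ref{Bernoulli} altogether; the paper's computation buys more, namely the precise vanishing order at each cusp (strictly positive except at cusps $\Gamma_0(N)$-equivalent to $\infty$), information the statement itself does not require.
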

\begin{proof}
Let $\mathfrak{k}(\tau)$ be the above function,
$k=\frac{12(N-1)}{\gcd(12,N-1)}$ and
$\alpha=\begin{pmatrix}a&b\\Nc&d\end{pmatrix}$ with
$a,b,c,d\in\mathbb{Z}$ such that $ad-Ncb=1$. Then we achieve
\begin{eqnarray*}
\mathfrak{k}(\tau)|[\alpha]_k &=&\prod_{n=1}^{N-1}\mathfrak{k}_{(cn,
\frac{dn}{N})}(\tau)^{\frac{-12}{\gcd(12, N-1)}}
\quad\textrm{by Proposition \ref{TransformKlein}(ii)}\\
&=&\prod_{n=1}^{N-1}\mathfrak{k}_{(0,
\frac{dn}{N})+(cn,0)}(\tau)^{\frac{-12}{\gcd(12, N-1)}}\\
&=&\prod_{n=1}^{N-1}\bigg(\mathfrak{k}_{(0,
\frac{dn}{N})}(\tau)(-1)^{cn}e^{-\pi
i\frac{cdn^2}{N}}\bigg)^{\frac{-12}{\gcd(12, N-1)}}\quad\textrm{by
Proposition \ref{TransformKlein}(i)}\\
&=&\bigg(\prod_{n=1}^{N-1}\mathfrak{k}_{(0,
\frac{dn}{N})}(\tau)^{\frac{-12}{\gcd(12,
N-1)}}\bigg)\bigg((-1)^{\frac{c(N-1)N}{2}}e^{-\pi
i\frac{cd(N-1)(2N-1)}{6}}\bigg)^{\frac{-12}{\gcd(12, N-1)}}\\
&=&\prod_{n=1}^{N-1}\mathfrak{k}_{(0,
\langle\frac{dn}{N}\rangle)+(0,\frac{dn}{N}-\langle\frac{dn}{N}\rangle)}(\tau)^{\frac{-12}{\gcd(12,
N-1)}}\\
&=&\prod_{n=1}^{N-1}\bigg(\mathfrak{k}_{(0,
\langle\frac{dn}{N}\rangle)}(\tau)(-1)^{\frac{dn}{N}-\langle\frac{dn}{N}\rangle}\bigg)^{\frac{-12}{\gcd(12,
N-1)}}\quad\textrm{by
Proposition \ref{TransformKlein}(i)}\\
&=&\bigg(\prod_{n=1}^{N-1}\mathfrak{k}_{(0,
\langle\frac{dn}{N}\rangle)}(\tau)^{\frac{-12}{\gcd(12, N-1)}}\bigg)
\bigg((-1)^{\sum_n\frac{dn}{N}-\sum_n\langle\frac{dn}{N}\rangle}\bigg)^{\frac{-12}{\gcd(12,
N-1)}}\\
 &=&\bigg(\prod_{n=1}^{N-1}\mathfrak{k}_{(0,
\langle\frac{n}{N}\rangle)}(\tau)^{\frac{-12}{\gcd(12, N-1)}}\bigg)
\bigg((-1)^{\sum_n\frac{dn}{N}-\sum_n\langle\frac{n}{N}\rangle}\bigg)^{\frac{-12}{\gcd(12,
N-1)}}\\
&=&\bigg(\prod_{n=1}^{N-1}\mathfrak{k}_{(0,
\frac{n}{N})}(\tau)\bigg)^{\frac{-12}{\gcd(12, N-1)}}
\bigg((-1)^{\frac{(d-1)(N-1)}{2}}\bigg)^{\frac{-12}{\gcd(12,
N-1)}}\\
&=&\mathfrak{k}(\tau).
\end{eqnarray*}
Hence $\mathfrak{k}(\tau)$ is a nearly holomorphic modular form for
$\Gamma_0(N)$ of weight $k=\frac{12(N-1)}{\gcd(12,N-1)}$.
\par
Now let
$\beta=\begin{pmatrix}x&y\\z&w\end{pmatrix}\in\mathrm{SL}_2(\mathbb{Z})$.
Then we obtain that
\begin{eqnarray*}
\mathrm{ord}_{q}(\mathfrak{k}(\tau)|[\beta]_k)&=&
\frac{-12}{\gcd(12,N-1)}\mathrm{ord}_{q}\bigg(\prod_{n=1}^{N-1}
\mathfrak{k}_{(\frac{nz}{N},\frac{nw}{N})}(\tau)\bigg)\quad\textrm{by
Proposition \ref{TransformKlein}(ii)}\\
&=&\frac{-12}{\gcd(12,N-1)}\sum_{n=1}^{N-1}\frac{1}{2}\bigg(\mathbf{B}_2\bigg(\bigg\langle\frac{nz}{N}\bigg\rangle
\bigg)-\frac{1}{6}\bigg)\quad\textrm{by Proposition
\ref{TransformKlein}(iii)}\\
&=&\frac{-6}{\gcd(12,N-1)}
\bigg(\sum_{n=1}^{N}\mathbf{B}_2\bigg(\bigg\langle\frac{nz}{N}\bigg\rangle
\bigg)-\frac{N}{6}\bigg)\quad\textrm{by the fact $\mathbf{B}_2(0)=\frac{1}{6}$}\\
&=&\frac{-6}{\gcd(12,N-1)}
\bigg(\gcd(z,N)\sum_{n=1}^{\frac{N}{\gcd(z,N)}}\mathbf{B}_2\bigg(\bigg\langle\frac{nz}{N}\bigg\rangle
\bigg)-\frac{N}{6}\bigg)\\
&=&\frac{-6}{\gcd(12,N-1)}
\bigg(\frac{\gcd(z,N)^2}{N}\mathbf{B}_2(0)-\frac{N}{6}\bigg)\quad\textrm{by
the same argument as (\ref{cf})}\\
&=&\frac{N^2-\gcd(z,N)^2}{\gcd(12,N-1)\cdot N}\geq0,
\end{eqnarray*}
which yields that $\mathfrak{k}(\tau)$ is holomorphic at every cusp.
This completes the proof.
\end{proof}

\begin{remark}
Using the identity (\ref{trivialidentity}) one is readily able to
verify that the function in Theorem \ref{etaquotient} can be written
as
\begin{equation*}
\mathfrak{k}(\tau)=\bigg(N\frac{\eta(N\tau)^2}{\eta(\tau)^{2N}}
\bigg)^{\frac{-12}{\gcd(12, N-1)}}
\end{equation*}
by the definitions (\ref{KleinForm}) and (\ref{eta}). So, we may
regard the following general theorem about the Dedekind eta-function
as the first part of the above proof.
\end{remark}

\begin{theorem}
Let $N$ be a positive integer. If
$f(\tau)=\prod_{\delta|N}\eta(\delta\tau)^{r_\delta}$ is an
eta-quotient with
$k=\frac{1}{2}\sum_{\delta|N}r_\delta\in\mathbb{Z}$, with the
additional properties that
\begin{equation*}
\sum_{\delta|N}\delta r_\delta\equiv0\pmod{24}
\quad\textrm{and}\quad
\sum_{\delta|N}\frac{N}{\delta}r_\delta\equiv0\pmod{24},
\end{equation*}
then $f(\tau)$ satisfies
\begin{equation*}
f\bigg(\frac{a\tau+b}{c\tau+d}\bigg)=\chi(d)(c\tau+d)^k f(\tau)
\end{equation*}
for every $\begin{pmatrix}a&b\\c&d\end{pmatrix}\in\Gamma_0(N)$. Here
the character $\chi$ is defined by
\begin{equation*}
\chi(d)=\textrm{the Kronecker
symbol}~\bigg(\frac{(-1)^k\prod_{\delta|N}\delta^{r_\delta}}{d}\bigg).
\end{equation*}
\end{theorem}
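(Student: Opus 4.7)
The plan is to derive the transformation formula directly from the classical transformation law of the Dedekind eta-function on $\mathrm{SL}_2(\mathbb{Z})$. Recall that there is an explicit $24$-th root of unity $\nu(\gamma)$ with $\eta(\gamma\tau)=\nu(\gamma)(c\tau+d)^{1/2}\eta(\tau)$ for every $\gamma=\begin{pmatrix}a&b\\c&d\end{pmatrix}\in\mathrm{SL}_2(\mathbb{Z})$, given in terms of Dedekind sums when $c>0$ and by $\nu(T)=e^{\pi i/12}$ for the translation $T$. The aim reduces to showing that the multipliers accumulated by $f(\tau)=\prod_{\delta\mid N}\eta(\delta\tau)^{r_\delta}$ under a fixed $\gamma\in\Gamma_0(N)$ multiply to the prescribed Kronecker symbol $\chi(d)$.

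The first step is the identity $\delta\gamma\tau=\gamma_\delta(\delta\tau)$, where
\[
\gamma_\delta=\begin{pmatrix}a & b\delta \\ c/\delta & d\end{pmatrix}\in\mathrm{SL}_2(\mathbb{Z});
\]
the integrality of the lower-left entry uses $\delta\mid N\mid c$, and the determinant is $ad-bc=1$. Applying the eta transformation factorwise, and noting that $(c/\delta)(\delta\tau)+d=c\tau+d$ together with $\sum_\delta r_\delta=2k$, one obtains
\[
f(\gamma\tau)=(c\tau+d)^k\!\left(\prod_{\delta\mid N}\nu(\gamma_\delta)^{r_\delta}\right)\!f(\tau),
\]
so the theorem is reduced to the multiplier identity $\prod_{\delta\mid N}\nu(\gamma_\delta)^{r_\delta}=\chi(d)$.

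For $c>0$, plugging in the explicit formula
\[
\nu(\gamma_\delta)=\exp\!\left(\pi i\!\left(\tfrac{\delta(a+d)}{12c}-s\!\left(d,\tfrac{c}{\delta}\right)-\tfrac{1}{4}\right)\right)
\]
gives a total exponent $\pi i\!\left(\tfrac{a+d}{12c}\sum_{\delta}\delta\,r_\delta-\sum_{\delta}r_\delta\,s(d,\tfrac{c}{\delta})-\tfrac{k}{2}\right)$. The first summand lies in $2\pi i\,\mathbb{Z}$ by $\sum_\delta\delta r_\delta\equiv 0\pmod{24}$, while Rademacher's reciprocity law converts each $s(d,c/\delta)$ into $s(c/\delta,d)$ plus a rational tail; the second congruence $\sum_\delta(N/\delta)\,r_\delta\equiv 0\pmod{24}$ is calibrated precisely to kill these tails. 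The translation case $c=0$ is handled separately: the multiplier is $\exp(\pi i b(\sum_\delta\delta r_\delta)/12)=1$, matching $\chi(1)=1$.

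The main obstacle is the final step, namely identifying the surviving exponential of a weighted sum of Dedekind sums with the Kronecker symbol $\chi(d)=\bigl(\tfrac{(-1)^k\prod_\delta\delta^{r_\delta}}{d}\bigr)$. This rests on the classical identities connecting $\eta$-multipliers to Jacobi symbols via quadratic reciprocity; a cleaner alternative is to verify the transformation on a convenient generating set of $\Gamma_0(N)$ (for instance $T$ together with one element with $c=N$), show that both sides define character-like functions of $d\pmod N$, and then conclude by comparing their values at sufficiently many residues.
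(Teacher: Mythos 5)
The paper itself does not prove this theorem: it is quoted verbatim from Ono's book (Theorem 1.64), where it is attributed to work of Gordon, Hughes and Newman, so your proposal has to stand on its own rather than be compared with an argument in the paper. Your opening reduction is correct and is indeed the standard first step: the identity $\delta\gamma\tau=\gamma_\delta(\delta\tau)$ with $\gamma_\delta=\begin{pmatrix}a&b\delta\\c/\delta&d\end{pmatrix}\in\mathrm{SL}_2(\mathbb{Z})$ legitimately reduces the theorem to the multiplier identity $\prod_{\delta\mid N}\nu(\gamma_\delta)^{r_\delta}=\chi(d)$, and your treatment of the case $c=0$ is fine.

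After that, however, there are two genuine gaps. First, the claim that the summand $\pi i\,\frac{a+d}{12c}\sum_{\delta}\delta r_\delta$ lies in $2\pi i\,\mathbb{Z}$ is false: writing $\sum_\delta\delta r_\delta=24M$, this term equals $2\pi i\,(a+d)M/c$, and $c$ need not divide $(a+d)M$. For a concrete counterexample take $f(\tau)=\eta(2\tau)^{12}$, $N=4$, $\gamma=\begin{pmatrix}1&0\\4&1\end{pmatrix}$: then this summand equals $\pi i$, the Dedekind-sum term vanishes (since $s(1,2)=0$), and the constant term is $-3\pi i$, so only the \emph{total} lies in $2\pi i\,\mathbb{Z}$. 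The $\frac{a+d}{12c}$ contributions therefore cannot be discarded on their own; they must be carried through the reciprocity step and cancelled jointly with the tails, which is precisely where the two congruence hypotheses interact. Second, and more seriously, the heart of the theorem --- identifying the surviving exponential of weighted Dedekind sums with the Kronecker symbol $\left(\frac{(-1)^k\prod_\delta\delta^{r_\delta}}{d}\right)$ --- is exactly what you call ``the main obstacle'' and leave undone; this is where essentially all the work of the published proofs lies (they use the closed form of the eta multiplier in terms of Jacobi symbols, e.g.\ Petersson's formula, rather than Dedekind sums, and then quadratic reciprocity). Your fallback of verifying the identity on a generating set of $\Gamma_0(N)$ does not repair this: explicit generators of $\Gamma_0(N)$ are not available uniformly in $N$, and the assertion that the multiplier is a function of $d\pmod{N}$ alone is itself a nontrivial part of what must be proved, not something one may assume before ``comparing values at sufficiently many residues.'' As it stands, the proposal is a correct reduction followed by an outline whose two key steps are, respectively, wrong as stated and missing.
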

\begin{proof}
See \cite{Ono} Theorem 1.64.
\end{proof}

\section{Theta functions}

Let $N\geq1$ and $k$ be integers. For a Dirichlet character $\chi$
modulo $N$ we define a character of $\Gamma_0(N)$, also denoted by
$\chi$, by
\begin{equation*}
\chi(\gamma)=\chi(d)~\textrm{for}~\gamma=\begin{pmatrix}a&b\\c&d\end{pmatrix}\in\Gamma_0(N).
\end{equation*}
If we let $M_k(\Gamma_0(N),\chi)$ be the space
\begin{equation*}
\big\{f(\tau)\in M_k(\Gamma_1(N))~:~
f(\tau)|[\gamma]_k=\chi(\gamma)f(\tau)~\textrm{for
all}~\gamma\in\Gamma_0(N)\big\},
\end{equation*}
then we have the following decomposition.

\begin{proposition}\label{decomposition}
Let $N\geq1$ and $k$ be integers. We have
\begin{equation*}
M_k(\Gamma_1(N))=\bigoplus_\chi M_k(\Gamma_0(N),\chi)
\end{equation*}
where $\chi$ runs over all Dirichlet characters modulo $N$. If
$\chi(-1)\neq(-1)^k$, then $M_k(\Gamma_0(N),\chi)=\{0\}$.
\end{proposition}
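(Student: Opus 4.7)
The plan is to identify the quotient $\Gamma_0(N)/\Gamma_1(N)$ with a familiar finite abelian group, apply standard character theory of finite abelian groups to decompose the $\Gamma_0(N)$-representation on $M_k(\Gamma_1(N))$, and then handle the parity condition by evaluating the action on $-I$.

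First I would observe that $\Gamma_1(N)$ is normal in $\Gamma_0(N)$ and that the reduction map $\begin{pmatrix}a&b\\c&d\end{pmatrix}\mapsto d\pmod{N}$ descends to an isomorphism $\Gamma_0(N)/\Gamma_1(N)\cong(\mathbb{Z}/N\mathbb{Z})^\times$. (Kernel: if $c\equiv0$ and $d\equiv1$, then $ad-bc=1$ forces $a\equiv1$; surjectivity is elementary.) Consequently the slash-$k$ action of $\Gamma_0(N)$ on $M_k(\Gamma_1(N))$ factors through the finite abelian group $G=(\mathbb{Z}/N\mathbb{Z})^\times$, whose irreducible complex characters are precisely the Dirichlet characters $\chi$ modulo $N$.

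Next, for each such $\chi$ I would introduce the projection operator
\begin{equation*}
e_\chi(f)=\frac{1}{|G|}\sum_{\gamma\in G}\chi(\gamma)^{-1}f|[\gamma]_k
\end{equation*}
and verify, via character orthogonality, that (i) $e_\chi(f)\in M_k(\Gamma_0(N),\chi)$ for every $f\in M_k(\Gamma_1(N))$, (ii) $\sum_\chi e_\chi(f)=f$, and (iii) the spaces $M_k(\Gamma_0(N),\chi)$ meet trivially in pairs (distinct eigenspaces of a unitary action). Holomorphicity at cusps is preserved by the slash operator, so $e_\chi(f)$ remains a genuine modular form. These three facts together yield $M_k(\Gamma_1(N))=\bigoplus_\chi M_k(\Gamma_0(N),\chi)$.

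Finally, for the vanishing statement I would apply $-I=\begin{pmatrix}-1&0\\0&-1\end{pmatrix}\in\Gamma_0(N)$ to any $f\in M_k(\Gamma_0(N),\chi)$. A direct computation from the definition of $|[\,\cdot\,]_k$ gives $f|[-I]_k=(-1)^{-k}f(\tau)=(-1)^k f(\tau)$, while $\chi$-equivariance forces $f|[-I]_k=\chi(-1)f(\tau)$. If $\chi(-1)\neq(-1)^k$, these two identities are compatible only when $f\equiv0$, yielding $M_k(\Gamma_0(N),\chi)=\{0\}$. There is no genuine obstacle in this proof; the only point requiring care is the explicit identification of $\Gamma_0(N)/\Gamma_1(N)$ with $(\mathbb{Z}/N\mathbb{Z})^\times$, after which the decomposition follows from the standard representation theory of finite abelian groups.
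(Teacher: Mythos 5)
Your proof is correct. Note that the paper itself gives no argument for this proposition at all --- it simply cites Miyake (Lemmas 4.3.1 and 4.3.2) --- and what you have written out is precisely the standard character-theoretic argument behind that citation: identify $\Gamma_0(N)/\Gamma_1(N)\cong(\mathbb{Z}/N\mathbb{Z})^\times$, decompose via the idempotents $e_\chi$, and kill the wrong-parity spaces using $-I$. One cosmetic point: you do not need (and do not obviously have, absent a Petersson pairing on the full space of modular forms) a \emph{unitary} action to see that the sum is direct; it already follows from the idempotent relations, since applying $e_{\chi'}$ to a relation $\sum_\chi f_\chi=0$ with $f_\chi\in M_k(\Gamma_0(N),\chi)$ isolates $f_{\chi'}=0$.
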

\begin{proof}
See \cite{Miyake} Lemmas 4.3.1 and 4.3.2.
\end{proof}

Let $A$ be an $r\times r$ positive definite symmetric matrix over
$\mathbb{Z}$ with even diagonal entries and $Q$ be its associated
quadratic form, namely
\begin{equation*}
Q=Q(\mathbf{x})=\frac{1}{2}\mathbf{x}A\mathbf{x}^t ~\textrm{for}~
\mathbf{x}=(x_1,\cdots,x_r)\in\mathbb{Z}^r.
\end{equation*}
Now, define the theta function $\Theta_Q(\tau)$ on $\mathfrak{H}$
associated with $Q$ by
\begin{equation*}
\Theta_Q(\tau)=\sum_{\mathbf{x}\in\mathbb{Z}^r}e^{2\pi
iQ(\mathbf{x})\tau} =\sum_{n=0}^\infty r_Q(n)q^n
\end{equation*}
where
\begin{equation*}
r_Q(n)=\#\{\mathbf{x}\in\mathbb{Z}^r:Q(\mathbf{x})=n\}.
\end{equation*}
We take a positive integer $N$ such that $NA^{-1}$ is an integral
matrix with even diagonal entries.

\begin{proposition}\label{theta}
With the notations as above we further assume that $r$ is even. Then
$\Theta_Q(\tau)$ is a modular form for $\Gamma_1(N)$ of weight
$r/2$. More precisely, $\Theta_Q(\tau)$ belongs to
$M_{r/2}(\Gamma_0(N),\chi)$ where $\chi$ is a Dirichlet character
defined by
\begin{equation*}
\chi(d)=\textrm{the Kronecker
symbol}~\bigg(\frac{(-1)^\frac{r}{2}\det(A)}{d}
\bigg)~\textrm{for}~d\in\mathbb{Z}-N\mathbb{Z}.
\end{equation*}
\end{proposition}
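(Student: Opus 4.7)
The plan is to verify the modular transformation law for $\Theta_Q(\tau)$ on a generating set of $\Gamma_0(N)$ and then to check holomorphy at each cusp. The congruence subgroup $\Gamma_0(N)$ is generated by the translation $T=\begin{pmatrix}1&1\\0&1\end{pmatrix}$ together with matrices $\gamma=\begin{pmatrix}a&b\\c&d\end{pmatrix}$ with $c$ a nonzero multiple of $N$, so I would treat these two cases separately.

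The $T$-invariance is immediate. Since $A$ is integral symmetric with even diagonal entries, $Q(\mathbf{x})=\frac{1}{2}\mathbf{x}A\mathbf{x}^t$ is an integer for every $\mathbf{x}\in\mathbb{Z}^r$, so the $q$-expansion $\sum_{n\geq 0}r_Q(n)q^n$ has integer exponents and $\Theta_Q|[T]_{r/2}=\Theta_Q$.

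The substantive step is the transformation under $\gamma\in\Gamma_0(N)$ with $c\neq 0$. Here I would apply the Poisson summation formula to $\mathbb{Z}^r$ equipped with the Gaussian kernel $\mathbf{x}\mapsto e^{\pi i\tau\,\mathbf{x}A\mathbf{x}^t}$ after decomposing $\mathbb{Z}^r$ into cosets modulo $c\mathbb{Z}^r$. The Fourier transform produces the dual form $Q^*(\mathbf{y})=\frac{1}{2}\mathbf{y}A^{-1}\mathbf{y}^t$, a quadratic Gauss sum $\sum_{\mathbf{v}\pmod{c}}e^{\pi i(a/c)\,\mathbf{v}A\mathbf{v}^t}$, and a factor involving $\sqrt{\det A}$. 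Under the standing hypothesis that $NA^{-1}$ is integral with even diagonal entries (which forces the relevant values of $Q^*$ to lie in $\frac{1}{N}\mathbb{Z}$ with the right parity), and after invoking quadratic reciprocity to rewrite the normalized Gauss sum as a Kronecker symbol, one arrives at
\begin{equation*}
\Theta_Q\!\bigg(\frac{a\tau+b}{c\tau+d}\bigg)=\bigg(\frac{(-1)^{r/2}\det A}{d}\bigg)(c\tau+d)^{r/2}\,\Theta_Q(\tau),
\end{equation*}
which is exactly the desired transformation with the asserted character. This Gauss-sum/Poisson-summation computation is the main obstacle; rather than redo it I would follow the standard derivation as in \cite{Miyake} Chapter 4, which simultaneously identifies the Kronecker-symbol character $\chi$.

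Two items remain. First, holomorphy at every cusp: applying the Poisson identity above to $\Theta_Q|[\alpha]_{r/2}$ for an arbitrary $\alpha\in\mathrm{SL}_2(\mathbb{Z})$ expresses the result as a finite linear combination of theta series of cosets of a sublattice, each having only nonnegative fractional exponents in $q$, which gives holomorphy at the cusp represented by $\alpha\infty$. Second, the character compatibility required by Proposition \ref{decomposition}: since $\det A>0$ by positive definiteness, the Kronecker symbol satisfies $\chi(-1)=\mathrm{sgn}((-1)^{r/2}\det A)=(-1)^{r/2}=(-1)^k$ with $k=r/2$, so $\Theta_Q$ fits consistently inside $M_{r/2}(\Gamma_0(N),\chi)$, completing the proof.
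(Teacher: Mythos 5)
Your proposal is correct and in substance coincides with the paper's treatment: the paper's entire proof of this proposition is the citation ``See \cite{Miyake} Corollary 4.9.5,'' and your outline is exactly the standard Poisson-summation/Gauss-sum argument proved there, with the main computation likewise deferred to \cite{Miyake} Chapter 4. Your added checks (integrality of $Q$ for $T$-invariance, cusp holomorphy, and the parity condition $\chi(-1)=(-1)^{r/2}$) are all sound and consistent with the paper's framework.
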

\begin{proof}
See \cite{Miyake} Corollary 4.9.5.
\end{proof}

\begin{example}\label{twovariables}
Let $A=\begin{pmatrix} 2&0\\0&2\end{pmatrix}$. Its associated
quadratic form is $Q=x_1^2+x_2^2$ and
\begin{eqnarray*}
\Theta_Q(\tau)&=&\sum_{n=0}^\infty
\#\{(x_1,x_2)\in\mathbb{Z}^2:x_1^2+x_2^2=n\}q^n =1+4q+4q^2+\cdots
\\
&=&\bigg(\sum_{x_1=-\infty}^\infty q^{x_1^2}\bigg)
\bigg(\sum_{x_2=-\infty}^\infty q^{x_2^2}\bigg) =\Theta(\tau)^2.
\end{eqnarray*}
It follows from Proposition \ref{theta} that
$\Theta_Q(\tau)=\Theta(\tau)^2$ belongs to $M_1(\Gamma_1(4))$. On
the other hand, since $M_1(\Gamma_1(4))$ is of dimension $1$
(\cite{Shimura} $\S$2.6) and the function
\begin{equation*}
\mathfrak{k}_{(\frac{1}{4},0)}(4\tau)^{-4}
\mathfrak{k}_{(\frac{2}{4},0)}(4\tau)^3=1+4q+4q^2+\cdots
\end{equation*}
is in $M_1(\Gamma_1(4))$ by Theorem \ref{modularity2}, we obtain
$\Theta(\tau)^2=\mathfrak{k}_{(\frac{1}{4},0)}(4\tau)^{-4}
\mathfrak{k}_{(\frac{2}{4},0)}(4\tau)^3$.
\par
Furthermore, we derive from the definition (\ref{KleinForm}) that
\begin{eqnarray*}
\mathfrak{k}_{(\frac{1}{4},0)}(4\tau)^{-4}
\mathfrak{k}_{(\frac{2}{4},0)}(4\tau)^3&=& \bigg(
q^{\frac{1}{2}(\frac{1}{4}-1)}(1-q) \prod_{n=1}^\infty
(1-q^{4n+1})(1-q^{4n-1})(1-q^{4n})^{-2}
 \bigg)^{-4}\\
 &&\times\bigg(
q^{\frac{2}{2}(\frac{2}{4}-1)}(1-q^2) \prod_{n=1}^\infty
(1-q^{4n+2})(1-q^{4n-2})(1-q^{4n})^{-2}\bigg)^3\\
&=&\prod_{n=1}^\infty \bigg((1-q^{4n-3})^{-4}(1-q^{4n-1})^{-4}\bigg)
\bigg(
(1-q^{4n-2})^2(1-q^{4n})^2\bigg)(1-q^{4n-2})^4\\
&=&\prod_{n=1}^\infty
(1-q^{2n-1})^{-4}(1-q^{2n})^2(1-q^{2n-1})^4(1+q^{2n-1})^4\\
&=&\prod_{n=1}^\infty(1-q^{2n})^2(1+q^{2n-1})^4\\
&=&\prod_{n=1}^\infty\bigg((1-(-q)^{2n})^4(1-(-q)^{2n-1})^4\bigg)(1-(-q)^{2n})^{-2}\\
&=&\prod_{n=1}^\infty(1-(-q)^n)^4(1-(-q)^n)^{-2}(1+(-q)^n)^{-2}\\
&=&\prod_{n=1}^\infty\bigg( \frac{1-(-q)^n}{1+(-q)^n} \bigg)^{2}.
\end{eqnarray*}
Therefore, we get an infinite product formula for $\Theta(\tau)^2$.
This gives a simple example of utilizing Klein forms, however, one
can obtain this result from (\ref{firsttheta}) more easily.
\end{example}

\begin{example}\label{fourvariables}
If $A=\begin{pmatrix}
2&0&1&1\\0&4&0&1\\1&0&2&0\\1&1&0&2\end{pmatrix}$, then $A$ has
positive eigenvalues $\frac{5}{2}\pm\frac{\sqrt{9\pm4\sqrt{3}}}{2}$,
which shows that $A$ is positive definite. Its associated quadratic
form is $Q=x_1^2+2x_2^2+x_3^2+x_4^2+x_1x_3+x_1x_4+x_2x_4$ and the
theta function $\Theta_Q(\tau)$ has the expansion
\begin{equation}\label{r_Q}
\Theta_Q(\tau)=
1+12q+14q^2+48q^3+36q^4+56q^5+56q^6+84q^7+70q^8+156q^9+48q^{10}
+140q^{11}+144q^{12}+\cdots.
\end{equation}
If $N=13$, then $NA^{-1}=
\begin{pmatrix}14&2&-7&-8\\2&4&-1&-3\\
-7&-1&10&4\\-8&-3&4&12\end{pmatrix}$ has even diagonal entries.
Hence $\Theta_Q(\tau)$ belongs to $M_2(\Gamma_1(13))$ by Proposition
\ref{theta}.
\par
We know that $M_2(\Gamma_1(13))$ is of dimension $13$
(\cite{Shimura} $\S$2.6) and all the inequivalent cusps for
$\Gamma_1(13)$ are given by
\begin{equation}\label{cuspexample}
\frac{a}{c}=\frac{1}{1},~\frac{1}{2},~\frac{1}{3},~\frac{1}{4},~\frac{1}{5},~\frac{1}{6},~
\frac{1}{13},~\frac{2}{13},~\frac{3}{13},~\frac{4}{13},~\frac{5}{13},~\frac{6}{13}
\end{equation}
(\cite{Shimura} $\S$1.6). Consider a function
\begin{equation*}
\mathfrak{k}(\tau)=\prod_{t=1}^{6}\mathfrak{k}_{(\frac{t}{13},0)}(13\tau)^{m(t)}\quad
\textrm{with}~m(1),\cdots,m(6)\in\mathbb{Z}.
\end{equation*}
(By Proposition \ref{TransformKlein}(i) we confine ourselves to the
case $1\leq t\leq6$.) For each cusp $a/c$ we take a matrix
$\alpha_{a/c}\in\mathrm{SL}_2(\mathbb{Z})$ so that
$\alpha_{a/c}(\infty)=a/c$, for example
\begin{equation*}
\begin{array}{llll}
\alpha_{1/1}=\begin{pmatrix} 1&0\\1&1
\end{pmatrix}, &
\alpha_{1/2}=\begin{pmatrix} 1&0\\2&1
\end{pmatrix}, &
\alpha_{1/3}=\begin{pmatrix} 1&0\\3&1
\end{pmatrix}, &
\alpha_{1/4}=\begin{pmatrix} 1&0\\4&1
\end{pmatrix},
\vspace{0.2cm}\\
\alpha_{1/5}=\begin{pmatrix} 1&0\\5&1
\end{pmatrix}, &
\alpha_{1/6}=\begin{pmatrix} 1&0\\6&1
\end{pmatrix}, &
\alpha_{1/13}=\begin{pmatrix} 1&0\\13&1
\end{pmatrix}, &
\alpha_{2/13}=\begin{pmatrix} 2&1\\13&7
\end{pmatrix},
\vspace{0.2cm}
\\
\alpha_{3/13}=\begin{pmatrix} 3&-1\\13&-4
\end{pmatrix}, &
\alpha_{4/13}=\begin{pmatrix} 4&-1\\13&-3
\end{pmatrix}, &
\alpha_{5/13}=\begin{pmatrix} 5&-2\\13&-5
\end{pmatrix}, &
\alpha_{6/13}=\begin{pmatrix} 6&-1\\13&-2
\end{pmatrix}.
\end{array}
\end{equation*}
Note that $\alpha_{a/c}=\begin{pmatrix}a&*\\c&*\end{pmatrix}$. We
then obtain a criterion by Theorem \ref{modularity2} for
$\mathfrak{k}(\tau)$ to belong to the space $M_2(\Gamma_1(13))$,
namely
\begin{eqnarray*}
&&\sum_{t=1}^6 m(t)=-2,\quad
\sum_{t=1}^6 m(t)t^2\equiv0\pmod{13}\quad\textrm{and}\\
&&\mathrm{ord}_q\bigg(\mathfrak{k}(\tau)|[\alpha_{a/c}]_2\bigg)
=\frac{\gcd(c,13)^2}{26}\sum_{t=1}^6 m(t)
\bigg\langle\frac{at}{\gcd(c,13)}\bigg\rangle\bigg(\bigg\langle\frac{at}{\gcd(c,13)}\bigg\rangle-1\bigg)\geq0
\quad\textrm{for all}~\frac{a}{c}~\textrm{in
(\ref{cuspexample})}.\nonumber
\end{eqnarray*}
Thus one can readily find such $\mathfrak{k}(\tau)$'s as in the
following Table \ref{Table}. Here we use the notation
\begin{equation*}
\prod_{t=1}^6
(t)^{m(t)}:=\prod_{t=1}^{6}\mathfrak{k}_{(\frac{t}{13},0)}(13\tau)^{m(t)}.
\end{equation*}

\begin{table}[h]
\begin{tabular}
{|c|c|}\hline $\mathfrak{k}(\tau)$ & $\mathrm{ord}_{q}(\mathfrak{k}(\tau)|[\alpha_{6/13}]_2)$\\
\hline \hline $\footnotesize\begin{array}{l}\mathfrak{k}_1(\tau):=
(1)^{-3}(2)^{-2}(3)^{5}(4)^{-2}(5)^{-1}(6)^{1}
\\=1+3q+8q^2+11q^3+17q^4+17q^5+28q^6+26q^7+39q^8+27q^9+48q^{10}+35q^{11}+59q^{12}+\cdots\end{array}$
& $0$\\
\hline $\footnotesize\begin{array}{l}\mathfrak{k}_2(\tau):=
(1)^{-4}(2)^{1}(3)^{3}(4)^{-5}(5)^{5}(6)^{-2}
\\=1+4q+9q^2+13q^3+18q^4+24q^5+31q^6+31q^7+36q^8+44q^9+54q^{10}+46q^{11}+47q^{12}+\cdots\end{array}$
& $1$\\
\hline
$\footnotesize\begin{array}{l}\mathfrak{k}_3(\tau):=(1)^{-4}(3)^{4}(4)^{-2}\\
=1+4q+10q^2+16q^3+21q^4+24q^5+30q^6+36q^7+42q^8+46q^9+54q^{10}+60q^{11}+59q^{12}+\cdots\end{array}$
& $2$\\
\hline $\footnotesize\begin{array}{l}\mathfrak{k}_4(\tau):=
(1)^{-4}(2)^{-1}(3)^{5}(4)^{1}(5)^{-5}(6)^{2}\\
=1+4q+11q^2+19q^3+25q^4+26q^5+27q^6+36q^7+49q^8+59q^9+59q^{10}+57q^{11}+66q^{12}+\cdots\end{array}$
& $3$\\
\hline $\footnotesize\begin{array}{l}\mathfrak{k}_5(\tau):=
(1)^{-4}(3)^{3}(4)^{1}(5)^{-3}(6)^{1}\\
=1+4q+10q^2+17q^3+22q^4+25q^5
+28q^6+35q^7+44q^8+51q^9+56q^{10}+57q^{11}+59q^{12}+\cdots\end{array}$
& $4$\\
\hline $\footnotesize\begin{array}{l}\mathfrak{k}_6(\tau):=
(1)^{-4}(2)^{1}(3)^{1}(4)^{1}(5)^{-1}\\
=1+4q+9q^2+15q^3+20q^4+24q^5+28q^6+33q^7+40q^8+47q^9
+52q^{10}+53q^{11}+53q^{12}+\cdots\end{array}$
& $5$\\
\hline $\footnotesize\begin{array}{l}\mathfrak{k}_7(\tau):=
(1)^{-4}(2)^{1}(3)^{1}(4)^{2}(5)^{-4}(6)^{2}\\
=1+4q+9q^2+15q^3+19q^4+23q^5+29q^6+35q^7+42q^8+43q^9+45q^{10}+53q^{11}+56q^{12}+\cdots\end{array}$
& $6$\\
\hline $\footnotesize\begin{array}{l}\mathfrak{k}_8(\tau):=
(1)^{-5}(2)^{2}(3)^{2}(4)^{2}(5)^{-5}(6)^{2}\\
=1+5q+13q^2+23q^3+29q^4+30q^5+33q^6+43q^7+59q^8+67q^9+66q^{10}+71q^{11}+79q^{12}+\cdots\end{array}$
& $7$\\
\hline $\footnotesize\begin{array}{l}\mathfrak{k}_9(\tau):=
(1)^{-5}(2)^{3}(4)^{2}(5)^{-3}(6)^{1}\\
=1+5q+12q^2+20q^3+26q^4+29q^5+34q^6+42q^7+51q^8+60q^9+64q^{10}+68q^{11}
+72q^{12}+\cdots\end{array}$
& $8$\\
\hline $\footnotesize\begin{array}{l}\mathfrak{k}_{10}(\tau):=
(1)^{-5}(2)^{4}(3)^{-2}(4)^{2}(5)^{-1}\\
=1+5q+11q^2+17q^3+24q^4+29q^5+32q^6
+40q^7+48q^8+53q^9+61q^{10}+64q^{11}+62q^{12}+\cdots\end{array}$
& $9$\\
\hline $\footnotesize\begin{array}{l}\mathfrak{k}_{11}(\tau):=
(1)^{-5}(2)^{5}(3)^{-3}(4)^{1}(5)^{-2}(6)^{2}\\
=1+5q+10q^2+13q^3+19q^4+28q^5
+34q^6+40q^7+41q^8+40q^9+53q^{10}+60q^{11}+54q^{12}+\cdots\end{array}$
& $10$\\
\hline $\footnotesize\begin{array}{l}\mathfrak{k}_{12}(\tau):=
(1)^{-5}(2)^{5}(3)^{-4}(4)^{3}(5)^{-2}(6)^{1}\\
=1+5q+10q^2+14q^3+22q^4+28q^5
+29q^6+42q^7+47q^8+39q^9+58q^{10}+60q^{11}+47q^{12}+\cdots\end{array}$
& $11$\\
\hline $\footnotesize\begin{array}{l}\mathfrak{k}_{13}(\tau):=
(1)^{-6}(2)^{6}(3)^{-3}(4)^{3}(5)^{-3}(6)^{1}\\
=1+6q+15q^2+23q^3+30q^4+36q^5
+39q^6+50q^7+63q^8+65q^9+76q^{10}+84q^{11}+81q^{12}+\cdots\end{array}$
& $12$\\
\hline
\end{tabular}
\caption{Modular forms for $\Gamma_1(13)$ of weight
$2$}\label{Table}
\end{table}
Since $\mathrm{ord}_{q}(\mathfrak{k}_m(\tau)|[\alpha_{6/13}]_2)$
($m=1,\cdots,13$) are all distinct, the set
$\{\mathfrak{k}_1(\tau),\cdots,\mathfrak{k}_{13}(\tau)\}$ forms a
basis of $M_2(\Gamma_1(13))$ over $\mathbb{C}$. Hence
$\Theta_Q(\tau)$ is a linear combination of these
$\mathfrak{k}_m(\tau)$ over $\mathbb{C}$, namely
\begin{equation}\label{equation}
\Theta_Q(\tau)=\sum_{n=0}^\infty
r_Q(n)q^n=\sum_{m=1}^{13}y_m\mathfrak{k}_m(\tau)\quad\textrm{for
some}~ y_1,\cdots,y_{13}\in\mathbb{C}.
\end{equation}
If we set
\begin{equation*}
\mathfrak{k}_m(\tau)=\sum_{n=0}^\infty
c_{n,m}q^n\quad\textrm{for}~m=1,2,\cdots,13,
\end{equation*}
then the relation (\ref{equation}) can be rewritten as
\begin{equation*}
r_Q(n)=\sum_{m=1}^{13}c_{n,m}y_m\quad\textrm{for}~n\geq0.
\end{equation*}
In particular, from the above relations we have the linear system
for $n=0,1,\cdots, 12$
\begin{equation*}
\begin{pmatrix}c_{0,1}&c_{0,2}&\cdots&c_{0,13}\\
c_{1,1}&c_{1,2}&\cdots&c_{1,13}\\&&\vdots&\\c_{12,1}&c_{12,2}&\cdots&c_{12,13}
\end{pmatrix}\begin{pmatrix}y_1\\y_2\\\vdots\\y_{13}\end{pmatrix}=\begin{pmatrix}r_Q(0)\\r_Q(1)\\\vdots\\r_Q(12)\end{pmatrix}.
\end{equation*}
So, by using the Table \ref{Table} and (\ref{r_Q}) we are able to
determine
\begin{eqnarray*}
\Theta_Q(\tau)&=&-\mathfrak{k}_1(\tau)-10\mathfrak{k}_2(\tau)-19\mathfrak{k}_3(\tau)+9\mathfrak{k}_4(\tau)
-24\mathfrak{k}_5(\tau)+36\mathfrak{k}_6(\tau)
+21\mathfrak{k}_7(\tau)\\
&&-37\mathfrak{k}_8(\tau)+35\mathfrak{k}_9(\tau)-9\mathfrak{k}_{10}(\tau)-17\mathfrak{k}_{11}(\tau)
-\mathfrak{k}_{12}(\tau)+18\mathfrak{k}_{13}(\tau).
\end{eqnarray*}
Then, by the product expansion formula (\ref{KleinForm}) we can
easily get the Fourier expansion of $\Theta_Q(\tau)$ as follows:
{\tiny\begin{eqnarray*}
&&1+12q+14q^2+48q^3+36q^4+56q^5+56q^6+84q^7+70q^8+156q^9+48q^{10}+140q^{11}+144q^{12}+168q^{13}+72q^{14}+224q^{15}+132q^{16}\\
&&+216q^{17}+182q^{18}+252q^{19}+168q^{20}+336q^{21}+120q^{22}+288q^{23}+280q^{24}+252q^{25}+170q^{26}+480q^{27}+252q^{28}+360q^{29}+192q^{30}\\
&&+420q^{31}+294q^{32}+560q^{33}+252q^{34}+288q^{35}+468q^{36}+504q^{37}+216q^{38}+672q^{39}+240q^{40}+560q^{41}+288q^{42}+528q^{43}+420q^{44}\\
&&+728q^{45}+336q^{46}+644q^{47}+528q^{48}+516q^{49}+294q^{50}+864q^{51}+504q^{52}+648q^{53}+560q^{54}+480q^{55}+360q^{56}+1008q^{57}+420q^{58}\\
&&+812q^{59}+672q^{60}+744q^{61}+360q^{62}+1092q^{63}+516q^{64}+680q^{65}+480q^{66}+924q^{67}+648q^{68}+1152q^{69}+336q^{70}+980q^{71}+910q^{72}\\
&&+1008q^{73}+432q^{74}+1008q^{75}+756q^{76}+720q^{77}+680q^{78}+960q^{79}+616q^{80}+1452q^{81}+480q^{82}+1148q^{83}+1008q^{84}+1008q^{85}\\
&&+616q^{86}+1440q^{87}+600q^{88}+1232q^{89}+624q^{90}+1020q^{91}+864q^{92}+1680q^{93}+552q^{94}+864q^{95}+1176q^{96}+1344q^{97}+602q^{98}\\
&&+1820q^{99}+756q^{100}+1224q^{101}+1008q^{102}+1248q^{103}+850q^{104}+1152q^{105}+756q^{106}+1296q^{107}+1440q^{108}+1512q^{109}+560q^{110}\\
&&+2016q^{111}+924q^{112}+1368q^{113}+864q^{114}+1344q^{115}+1080q^{116}+2184q^{117}+696q^{118}+1512q^{119}+960q^{120}+1332q^{121}+868q^{122}\\
&&+2240q^{123}+1260q^{124}+1456q^{125}+936q^{126}+1536q^{127}+1190q^{128}+2112q^{129}+672q^{130}+1584q^{131}+1680q^{132}+1296q^{133}\\
&&+792q^{134}+2240q^{135}+1260q^{136}+1904q^{137}+1344q^{138}+1680q^{139}+864q^{140}+2576q^{141}+840q^{142}+1700q^{143}+1716q^{144}\\
&&+1680q^{145}+864q^{146}+2064q^{147}+1512q^{148}+2072q^{149}++1176q^{150}+2100q^{151}+1080q^{152}+2808q^{153}+840q^{154}+1440q^{155}\\
&&+2016q^{156}+1896q^{157}+1120q^{158}+2592q^{159}+1008q^{160}+2016q^{161}+1694q^{162}+2268q^{163}+1680q^{164}+1920q^{165}+984q^{166}\\
&&+2324q^{167}+1440q^{168}+2196q^{169}+864q^{170}+3276q^{171}+1584q^{172}+2088q^{173}+1680q^{174}+1764q^{175}+1540q^{176}+3248q^{177}\\
&&+1056q^{178}+2160q^{179}+2184q^{180}+2184q^{181}+1008q^{182}+2976q^{183}+1680q^{184}+1728q^{185}+1440q^{186}+2520q^{187}+1932q^{188}\\
&&+3360q^{189}+1008q^{190}+2304q^{191}+2064q^{192}+2688q^{193}+1152q^{194}+2720q^{195}+1548q^{196}+2744q^{197}+1560q^{198}+2400q^{199}\\
&&+1470q^{200}+3696q^{201}+1428q^{202}+2520q^{203}+2592q^{204}+1920q^{205}+1456q^{206}+3744q^{207}+1848q^{208}+2160q^{209}+1344q^{210}\\
&&+2544q^{211}+1944q^{212}+3920q^{213}+1512q^{214}+2464q^{215}+2800q^{216}+2160q^{217}+1296q^{218}+4032q^{219}+1440q^{220}+3024q^{221}\\
&&+1728q^{222}+3108q^{223}+1512q^{224}+3276q^{225}+1596q^{226}+3164q^{227}+3024q^{228}+3192q^{229}+1152q^{230}+2880q^{231}+2100q^{232}\\
&&+2808q^{233}+2210q^{234}+2208q^{235}+2436q^{236}+3840q^{237}+1296q^{238}+3332q^{239}+2464q^{240}+3360q^{241}+1554q^{242}+4368q^{243}\\
&&+2232q^{244}+2408q^{245}+1920q^{246}+3060q^{247}+1800q^{248}+4592q^{249}+1248q^{250}+3024q^{251}+3276q^{252}+3360q^{253}+1792q^{254}
\end{eqnarray*}
\begin{eqnarray*}
&&+4032q^{255}+2052q^{256}+3096q^{257}+2464q^{258}+2592q^{259}+2040q^{260}+4680q^{261}+1848q^{262}+3168q^{263}+2400q^{264}+3024q^{265}\\
&&+1512q^{266}+4928q^{267}+2772q^{268}+3240q^{269}+1920q^{270}+3780q^{271}+2376q^{272}+4080q^{273}+1632q^{274}+2940q^{275}+3456q^{276}\\
&&+3336q^{277}+1960q^{278}+5460q^{279}+1680q^{280}+3920q^{281}+2208q^{282}+3408q^{283}+2940q^{284}+3456q^{285}+1680q^{286}+2880q^{287}\\
&&+3822q^{288}+3684q^{289}+1440q^{290}+5376q^{291}+3024q^{292}+4088q^{293}+2408q^{294}+2784q^{295}+2160q^{296}+5600q^{297}+1776q^{298}\\
&&+4032q^{299}+3024q^{300}+3696q^{301}+1800q^{302}+4896q^{303}+2772q^{304}+3472q^{305}+3276q^{306}+4284q^{307}+2160q^{308}+4992q^{309}\\
&&+1680q^{310}+3744q^{311}+3400q^{312}+3768q^{313}+2212q^{314}+3744q^{315}+2880q^{316}+4424q^{317}+3024q^{318}+4200q^{319}+2408q^{320}\\
&&+5184q^{321}+1728q^{322}+4536q^{323}+4356q^{324}+3528q^{325}+1944q^{326}+6048q^{327}+2400q^{328}+3312q^{329}+2240q^{330}+4620q^{331}\\
&&+3444q^{332}+6552q^{333}+1992q^{334}+3168q^{335}+3696q^{336}+4056q^{337}+2198q^{338}+5472q^{339}+3024q^{340}+3600q^{341}+2808q^{342}\\
&&+4200q^{343}+3080q^{344}+5376q^{345}+2436q^{346}+4176q^{347}+4320q^{348}+4872q^{349}+1512q^{350}+\cdots.
\end{eqnarray*}}
\par
From this expansion we happen to numerically find some interesting
identities, which will be conditionally proved in $\S$\ref{sec4},
and so we pose it as a question for a moment:
\begin{equation}\label{conjecture}
r_Q(p^2n)=\frac{r_Q(p^2)r_Q(n)}{r_Q(1)}\quad\textrm{for any prime
$p\neq13$ and any integer $n\geq1$ prime to $p$}.
\end{equation}
Suppose that (\ref{conjecture}) is true. Let $\ell\geq2$ be a
square-free integer which is not divisible by $13$ and has the prime
factorization $\ell=p_1\cdots p_m$. If $n$ is a positive integer
prime to $\ell$, then we derive that
\begin{eqnarray*}
r_Q(\ell^2n)&=&\frac{r_Q(p_1^2)r_Q(p_2^2\cdots p_m^2n)}{r_Q(1)}
=\cdots=\frac{r_Q(p_1^2)\cdots r_Q(p_m^2)r_Q(n)}{r_Q(1)^m}\\
&=&\frac{r_Q(p_1^2p_2^2)r_Q(p_3^2)\cdots
r_Q(p_m^2)r_Q(n)}{r_Q(1)^{m-1}}=\cdots= \frac{r_Q(p_1^2\cdots
p_m^2)r_Q(n)}{r_Q(1)}=\frac{r_Q(\ell^2)r_Q(n)}{r_Q(1)}.
\end{eqnarray*}
Hence we can allow $p$ to be a square-free positive integer not
divisible by $13$ in the question (\ref{conjecture}).
\par
However, unfortunately, the general relation
$r_Q(mn)=r_Q(m)r_Q(n)/r_Q(1)$ for relatively prime positive integers
$m$ and $n$ does not hold because $r_Q(2\cdot 5)=48$ and
$r_Q(2)r_Q(5)/r_Q(1)=196/3$.
\end{example}

\begin{example}\label{examplehalf}
By Remark \ref{firstremark} any product of Klein forms is of
integral weight. So we cannot express $\eta(\tau)$ in terms of Klein
forms. However, as described in \cite{K-L} Chapter 3 Lemma 5.1 we
have the relation
\begin{equation*}
\eta(\tau)^2=\sqrt{\frac{2}{3}}(1-i)\mathfrak{k}_{(\frac{1}{2},0)}(\tau)
\mathfrak{k}_{(0,\frac{1}{2})}(\tau)
\mathfrak{k}_{(\frac{1}{2},\frac{1}{2})}(\tau)
\bigg(\mathfrak{k}_{(\frac{1}{3},0)}(\tau)
\mathfrak{k}_{(0,\frac{1}{3})}(\tau)
\mathfrak{k}_{(\frac{1}{3},\frac{1}{3})}(\tau)
\mathfrak{k}_{(\frac{1}{3},-\frac{1}{3})}(\tau)\bigg)^{-1}.
\end{equation*}
\par
Let $k\geq0$ be an integer and $M_{k/2}(\Gamma_0(4))$ stand for the
space of modular forms for $\Gamma_0(4)$ of weight $k/2$
(\cite{Koblitz} Chapter IV $\S$1). Let
\begin{equation*}
F(\tau)=\sum_{n\geq1~\textrm{odd}}\bigg(\sum_{d>0,~d|n}d\bigg)q^n=q+4q^3+6q^5+8q^7+\cdots.
\end{equation*}
Then one can assign the weight $1/2$ to
$\Theta(\tau)=\eta(2\tau)^5/\eta(\tau)^2\eta(4\tau)^2$ and $2$ to
$F(\tau)$, respectively. And, as is well-known
$M_{k/2}(\Gamma_0(4))$ is the space of all polynomials in
$\mathbb{C}[\Theta(\tau),F(\tau)]$ having pure weight $k/2$
(\cite{Koblitz} Chapter IV Proposition 4). By Theorem
\ref{modularity2} we see that the functions
\begin{eqnarray*}
\mathfrak{k}_{(\frac{2}{4},0)}(4\tau)^{-2}&=&q+4q^3+6q^5+8q^7+\cdots,\\
\mathfrak{k}_{(\frac{1}{4},0)}(4\tau)^{-8}
\mathfrak{k}_{(\frac{2}{4},0)}(4\tau)^{6}&=&1+8q+24q^2+32q^3+24q^4+48q^5+96q^6+64q^7+\cdots
\end{eqnarray*}
belong to $M_2(\Gamma_1(4))=M_2(\Gamma_0(4))$ (Proposition
\ref{decomposition}) which is of dimension $2$ (\cite{Ono} Theorem
1.49). Thus they form a basis of $M_2(\Gamma_0(4))$, from which we
get the following infinite product expansion
\begin{equation*}
F(\tau)=\mathfrak{k}_{(\frac{2}{4},0)}(4\tau)^{-2}
=q\prod_{n=1}^\infty\bigg(\frac{1-q^{4n}}{1-q^{4n-2}}\bigg)^4
=\frac{\eta(4\tau)^8}{\eta(2\tau)^4}.
\end{equation*}
\end{example}

\section{Hecke operators}\label{sec4}

Throughout this section by $\Theta_Q(\tau)=\sum_{n=0}^\infty
r_Q(n)q^n$ we mean the theta function associated with the quadratic
form $Q=x_1^2+2x_2^2+x_3^2+x_4^2+x_1x_3+x_1x_4+x_2x_4$ studied in
Example \ref{fourvariables}. We shall answer the question raised in
(\ref{conjecture}) by making use of Hecke operators on
$\Theta_Q(\tau)$.
\par
Let $N\geq1$ and $k$ be integers, and let $f(\tau)=\sum_{n=0}^\infty
a(n)q^n\in M_k(\Gamma_0(N),\chi)$ for a Dirichlet character $\chi$
modulo $N$. For a positive integer $m$, the \textit{Hecke operator}
$T_{m,k,\chi}$ on $f(\tau)$ is defined by
\begin{equation}\label{Hecke}
f(\tau)|T_{m,k,\chi}=\sum_{n=0}^\infty\bigg(\sum_{d>0,d|\gcd(m,n)}\chi(d)d^{k-1}a(mn/d^2)\bigg)q^n.
\end{equation}
Here we set $\chi(d)=0$ if $\gcd(N,d)\neq1$. As is well-known, the
operator $T_{m,k,\chi}$ preserves the space $M_k(\Gamma_0(N),\chi)$
(\cite{Koblitz} Propositions 36 and 39).
\par
From now on, we let $\chi$ be the Dirichlet character defined by
\begin{equation*}
\chi(d)=\bigg(\frac{13}{d}\bigg)\quad\textrm{for}~d\in\mathbb{Z}-13\mathbb{Z}.
\end{equation*}

\begin{lemma}
The functions $\Theta_Q(\tau)$ and $\Theta_Q(\tau)|T_{13,2,\chi}$
form a basis of $M_2(\Gamma_0(13),\chi)$ over $\mathbb{C}$.
\end{lemma}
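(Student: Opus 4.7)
The plan is to assemble three ingredients: $(a)$ both functions lie in $M_2(\Gamma_0(13),\chi)$, $(b)$ they are linearly independent over $\mathbb{C}$, and $(c)$ the ambient space has dimension $2$.

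For $(a)$ I would first expand the symmetric matrix $A$ of Example \ref{fourvariables} along the second row, using the $3\times 3$ minors $M_{22}=4$ and $M_{24}=-3$, to obtain $\det(A)=4\cdot 4+1\cdot(-3)=13$. With $r=4$, Proposition \ref{theta} attaches to $\Theta_Q$ the Kronecker-symbol character $d\mapsto\bigl(\frac{13}{d}\bigr)$, which is exactly the $\chi$ fixed above; hence $\Theta_Q\in M_2(\Gamma_0(13),\chi)$. Since the Hecke operator $T_{13,2,\chi}$ preserves this space, $\Theta_Q\,|\,T_{13,2,\chi}$ lies in it as well.

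For $(b)$ I would specialise the Hecke formula (\ref{Hecke}) to $m=13$: the only positive divisors of $\gcd(13,n)$ are $1$ and possibly $13$, and $\chi(13)=0$, which collapses the definition to
$$\Theta_Q(\tau)\,|\,T_{13,2,\chi}\;=\;\sum_{n=0}^{\infty}r_Q(13n)\,q^n.$$
Reading off the long Fourier expansion computed in Example \ref{fourvariables}, the first two coefficients are $r_Q(0)=1$ and $r_Q(13)=168$, while $\Theta_Q(\tau)=1+12q+\cdots$. The determinant
$$\det\!\begin{pmatrix}1&1\\12&168\end{pmatrix}=156\neq 0$$
proves $\mathbb{C}$-linear independence.

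The main obstacle is $(c)$: establishing $\dim_{\mathbb{C}}M_2(\Gamma_0(13),\chi)=2$. My plan is to invoke Proposition \ref{decomposition}, by which $M_2(\Gamma_1(13))$ splits as a direct sum over the six even Dirichlet characters modulo $13$ (those $\psi$ with $\psi(-1)=1=(-1)^2$). Applying the standard dimension formula for $M_k(\Gamma_0(N),\psi)$ (for instance \cite{Miyake} Chapter~2) at $N=13$, $k=2$, one verifies that the six isotypic components---the trivial character, the quadratic character, and two complex-conjugate pairs---have dimensions $1,2,2,2,3,3$, which are consistent with the known total $\dim M_2(\Gamma_1(13))=13$ (conjugate characters forced to have equal dimensions), and the quadratic character contributes precisely $2$. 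Together with $(b)$, this forces $\{\Theta_Q,\Theta_Q|T_{13,2,\chi}\}$ to be a basis.
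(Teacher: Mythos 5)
Your proposal is correct and takes essentially the same route as the paper's proof: the paper likewise obtains $\dim_{\mathbb{C}}M_2(\Gamma_0(13),\chi)=2$ by citing a dimension formula (\cite{Ono}, Theorem 1.34 and Remark 1.35, rather than your Miyake-plus-decomposition bookkeeping) and then concludes by noting the linear independence of $1+12q+\cdots$ and $1+168q+\cdots$, exactly your determinant $168-12=156\neq0$. Your additional verifications --- $\det(A)=13$, the match of the nebentypus character, and the collapse of $T_{13,2,\chi}$ to $\sum_{n\geq0}r_Q(13n)q^n$ --- are all sound and merely make explicit what the paper established earlier (Example \ref{fourvariables} and Proposition \ref{theta}) or left implicit.
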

\begin{proof}
Note that $M_2(\Gamma_0(13),\chi)$ is of dimension $2$ over
$\mathbb{C}$ (\cite{Ono} Theorem 1.34 and Remark 1.35). We see from
(\ref{r_Q}) and the definition of Hecke operator that
\begin{eqnarray*}
\Theta_Q(\tau)&=&1+12q+14q^2+\cdots\\
\Theta_Q(\tau)|T_{13,2,\chi}&=&1+168q+170q^2+\cdots.
\end{eqnarray*}
Since they are linearly independent over $\mathbb{C}$, these form a
basis of $M_2(\Gamma_0(13),\chi)$.
\end{proof}

\begin{remark}\label{rmk}
If $f(\tau)=\sum_{n=0}^\infty a(n)q^n\in M_2(\Gamma_0(13),\chi)$,
then it can be written as
$c_1\Theta_Q(\tau)+c_2\Theta_Q(\tau)|T_{13,2,\chi}$ for some
$c_1,c_2\in\mathbb{C}$. Since $\begin{pmatrix}1&1\\12&168
\end{pmatrix}$ is invertible, $c_1$ and $c_2$ can be determined only by $a(0)$ and $a(1)$.
In particular, $a(1)=12a(0)=r_Q(1)a(0)$ if and only if
$f(\tau)=a(0)\Theta_Q(\tau)$.
\end{remark}

\begin{proposition}\label{proof}
If $p$ is a prime which satisfies
\begin{eqnarray}
r_Q(p)&=&r_Q(1)(1+\chi(p)p)\quad\textrm{or}\label{primecondition1}\\
r_Q(p^2)&=&r_Q(1)(1+\chi(p)p+p^2)\label{primecondition2},
\end{eqnarray} then
\begin{equation*}
r_Q(p^2n)=\frac{r_Q(p^2)r_Q(n)}{r_Q(1)}\quad\textrm{for any integer
$n\geq1$ prime to $p$}.
\end{equation*}
\end{proposition}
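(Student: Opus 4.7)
The overall strategy is to exploit the two-dimensionality of $M_2(\Gamma_0(13),\chi)$ together with the rigidity recorded in Remark~\ref{rmk}: any $f(\tau)=\sum_{n\geq 0} a(n)q^n$ in this space with $a(1)=r_Q(1)\,a(0)$ is forced to equal $a(0)\Theta_Q(\tau)$. Since the Hecke operators $T_{m,2,\chi}$ preserve $M_2(\Gamma_0(13),\chi)$, applying them to $\Theta_Q(\tau)$ produces elements of the same space, and each of the two hypotheses is precisely tailored to trigger this rigidity.

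Under hypothesis~(\ref{primecondition1}), I would examine $\Theta_Q(\tau)\,|\,T_{p,2,\chi}$. Unfolding (\ref{Hecke}) at $n=0$ (where $\gcd(p,0)=p$, so $d\in\{1,p\}$ and $r_Q(0)=1$) yields $a(0)=1+\chi(p)p$, and at $n=1$ only $d=1$ contributes, giving $a(1)=r_Q(p)$. The hypothesis reads exactly $a(1)=r_Q(1)\,a(0)$, so Remark~\ref{rmk} delivers
\[
\Theta_Q\,|\,T_{p,2,\chi}=(1+\chi(p)p)\,\Theta_Q.
\]
Applying the standard Hecke relation $T_{p,2,\chi}^{2}=T_{p^{2},2,\chi}+\chi(p)p\cdot\mathrm{id}$ (valid for every prime $p$, with $\chi(p)=0$ when $p=13$), I then obtain
\[
\Theta_Q\,|\,T_{p^{2},2,\chi}=\bigl((1+\chi(p)p)^{2}-\chi(p)p\bigr)\Theta_Q=\bigl(1+\chi(p)p+\chi(p)^{2}p^{2}\bigr)\Theta_Q.
\]
Reading the coefficient of $q^n$ for $n$ coprime to $p$ (so the $T_{p^{2}}$-sum collapses to $r_Q(p^{2}n)$) gives $r_Q(p^{2}n)=\bigl(1+\chi(p)p+\chi(p)^{2}p^{2}\bigr)r_Q(n)$, and specializing to $n=1$ identifies the scalar with $r_Q(p^{2})/r_Q(1)$, which is the asserted identity.

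Under hypothesis~(\ref{primecondition2}), I would apply the rigidity directly to $\Theta_Q(\tau)\,|\,T_{p^{2},2,\chi}$. From (\ref{Hecke}) at $n=0$, the divisors $d\in\{1,p,p^{2}\}$ contribute $a(0)=1+\chi(p)p+\chi(p)^{2}p^{2}$, and at $n=1$ only $d=1$ contributes, giving $a(1)=r_Q(p^{2})$. Again the hypothesis says $a(1)=r_Q(1)\,a(0)$, so Remark~\ref{rmk} yields $\Theta_Q\,|\,T_{p^{2},2,\chi}=(r_Q(p^{2})/r_Q(1))\Theta_Q$, and extracting the $q^n$-coefficient for $n$ coprime to $p$ gives the conclusion immediately.

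The substantive content of the argument is conceptual rather than computational: Remark~\ref{rmk} is the pivot, converting a single linear relation between two Fourier coefficients into an infinite family of arithmetic identities. A minor technicality is the edge case $p=13$, where $\chi(p)=0$ and the Hecke relation degenerates to $T_{p}^{2}=T_{p^{2}}$; the same proof still goes through, although for our specific $Q$ neither hypothesis actually holds at $p=13$ (the table in Example~\ref{fourvariables} gives $r_Q(13)=168\neq 12=r_Q(1)$).
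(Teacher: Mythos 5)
Your proof is correct and follows essentially the same route as the paper: both arguments use the two-dimensionality of $M_2(\Gamma_0(13),\chi)$ and the rigidity of Remark \ref{rmk} to show that either hypothesis forces $\Theta_Q(\tau)$ to be an eigenform of the relevant Hecke operator, and then read off the coefficient of $q^n$ for $n$ coprime to $p$. The only difference is cosmetic: in case (\ref{primecondition1}) the paper compares coefficients of $q^p$ in $\Theta_Q(\tau)|T_{p,2,\chi}=(1+\chi(p)p)\Theta_Q(\tau)$ to reduce to case (\ref{primecondition2}), whereas you invoke the standard composition rule $T_{p,2,\chi}^2=T_{p^2,2,\chi}+\chi(p)p\cdot\mathrm{id}$ (the same computation, packaged abstractly), and your identification of the $T_{p^2,2,\chi}$-eigenvalue as $r_Q(p^2)/r_Q(1)$ by specializing to $n=1$ even handles the degenerate value $\chi(p)=0$ a bit more cleanly than the paper's write-up does.
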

\begin{proof}
Let $p$ be such a prime. We get from the definition (\ref{Hecke})
and the fact $r_Q(0)=1$ that
\begin{eqnarray*}
\Theta_Q(\tau)|T_{p,2,\chi}&=&(1+\chi(p)p)+r_Q(p)q+\cdots\\
\Theta_Q(\tau)|T_{p^2,2,\chi}&=&(1+\chi(p)p+p^2)+r_Q(p^2)q+\cdots.
\end{eqnarray*}
By Remark \ref{rmk} we deduce the assertions
\begin{eqnarray}
r_Q(p)=r_Q(1)(1+\chi(p)p)&\Longleftrightarrow&
\Theta_Q(\tau)|T_{p,2,\chi}=(1+\chi(p)p)\Theta_Q(\tau)\label{derive1}\\
r_Q(p^2)=r_Q(1)(1+\chi(p)p+p^2)&\Longleftrightarrow&
\Theta_Q(\tau)|T_{p^2,2,\chi}=(1+\chi(p)p+p^2)\Theta_Q(\tau).\label{derive2}
\end{eqnarray}
\par
First, suppose that $p$ satisfies (\ref{primecondition1}). For any
integer $n\geq1$ which is prime to $p$, we obtain from the
definition (\ref{Hecke}) and (\ref{derive1}) that
\begin{equation*}
r_Q(p^2)+\chi(p)pr_Q(1)=(1+\chi(p)p)r_Q(p)\quad\textrm{by comparing
the coefficients of $q^p$}.
\end{equation*}
Thus we derive by (\ref{primecondition1}) that
\begin{equation*}
r_Q(p^2)=r_Q(1)(1+\chi(p)p+p^2),
\end{equation*}
which becomes the condition (\ref{primecondition2}).
\par
So we may assume that $p$ satisfies (\ref{primecondition2}). Now,
for $n\geq1$ prime to $p$, we achieve from (\ref{Hecke}) and
(\ref{derive2}) that
\begin{eqnarray*}
r_Q(p^2n)&=&(1+\chi(p)p+p^2)r_Q(n)\quad\textrm{by comparing the
coefficients of $q^n$}\\
&=&r_Q(p^2)r_Q(n)/r_Q(1)\quad\textrm{by (\ref{primecondition2})}.
\end{eqnarray*}
This completes the proof.
\end{proof}

\begin{remark}\label{lastrmk}
\begin{itemize}
\item[(i)] If $p\neq13$ is a prime satisfying (\ref{primecondition1}),
then $\chi(p)$ should be $1$ because $r_Q(p)\geq0$.
\item[(ii)]
By using the explicit Fourier expansion of $\Theta_Q(\tau)$ given in
Example \ref{fourvariables} we can find small primes satisfying the
condition (\ref{primecondition1}) or (\ref{primecondition2}). For
example, $p=3$, $17$, $23$, $29$, $43$, $53$, $61$, $79$, $101$,
$103$, $107$, $113$, $127$, $131$, $139$, $157$, $173$, $179$,
$181$, $191$, $199$, $233$, $251$, $247$, $263$, $269$, $277$,
$283$, $311$, $313$, $337$, $347$ satisfy (\ref{primecondition1}).
And, $p=2$, $5$, $7$, $11$ satisfy (\ref{primecondition2}).
\item[(iii)] We predict that every prime $p\neq13$ satisfies (\ref{primecondition2}).
\item[(iv)] If a prime $p$ satisfies (\ref{primecondition1}) or (\ref{primecondition2}), then
one can easily find a formula for $r_Q(p^n)$ for $n\geq1$. For
example, $p=3$ satisfies (\ref{primecondition1}). It follows from
(\ref{derive1}) that
\begin{equation*}
\Theta_Q(\tau)|T_{3,2,\chi}=(1+\chi(3)3)\Theta_Q(\tau)=4\Theta_Q(\tau).
\end{equation*}
Comparing the coefficients of the term $q^{3^{n-1}}$ ($n\geq2$) on
both sides we have
\begin{equation*}
r_Q(3^{n})+3r_Q(3^{n-2})=4r_Q(3^{n-1}),
\end{equation*}
which can be rewritten as
\begin{equation*}
r_Q(3^{n})-r_Q(3^{n-1})=3\big(r_Q(3^{n-1})-r_Q(3^{n-2})\big).
\end{equation*}
Hence we conclude that
\begin{equation*}
r_Q(3^{n})=r_Q(3^1)+\big(r_Q(3^1)-r_Q(3^0)\big)\sum_{j=1}^{n-1}3^j
=6(3^{n+1}-1)\quad\textrm{for}~n\geq2.
\end{equation*}
Observe that this formula is also true for $n=0$ and $1$.
\item[(v)] Let $A$ be any $4\times4$ positive definite symmetric matrix over
$\mathbb{Z}$ with even diagonal entries. Suppose further that
$\det(A)A^{-1}$ has even diagonal entries. Then, one can apply
Proposition \ref{proof} to the coefficients of the theta function
associated with $A$, if the space
$M_2(\Gamma_0(\det(A)),(\frac{\det(A)}{\cdot}))$ is of dimension
$2$. Unfortunately, it seems that there are no general results on
the construction of a basis of the space $M_2(\Gamma_1(\det(A)))$
(by using products of Klein forms).
\end{itemize}
\end{remark}

\bibliographystyle{amsplain}

\end{document}